\documentclass[12pt, reqno]{amsart}
\usepackage{longtable}
\PassOptionsToPackage{table}{xcolor}
\usepackage{stmaryrd}
\expandafter\def\csname opt@stmaryrd.sty\endcsname
{only,shortleftarrow,shortrightarrow}
\usepackage{extpfeil}
\usepackage{amsmath,amssymb,amsthm}
\usepackage{hyperref}
\hypersetup{colorlinks=true,urlcolor=blue,citecolor=blue,linkcolor=blue}
\usepackage{courier}
\usepackage{colonequals}
\usepackage{tikz}
\usepackage{tikz-cd}
\usetikzlibrary{calc,matrix,arrows,decorations.markings}
\usepackage{array}
\usepackage{color}
\usepackage{enumerate}
\usepackage{nicefrac}
\usepackage{listings}
\usepackage{siunitx}
\usepackage{seqsplit}
\usepackage{cancel}
\usepackage{fullpage}

\newcommand{\Q}{\mathbb{Q}}
\newcommand{\PP}{\mathbb{P}}

\newcommand{\C}{\mathbb{C}}

\newcommand{\Z}{\mathbb{Z}}

\newcommand{\new}{\operatorname{new}}
\newcommand{\Jac}{\operatorname{Jac}}

\DeclareFontFamily{U}{wncy}{}
    \DeclareFontShape{U}{wncy}{m}{n}{<->wncyr10}{}
    \DeclareSymbolFont{mcy}{U}{wncy}{m}{n}
    \DeclareMathSymbol{\Sh}{\mathord}{mcy}{"58} 

\newcommand{\defi}[1]{\textsf{\textit{#1}}}

\newcommand{\psmod}[1]{~(\textup{\text{mod}}~{#1})}

\usepackage{xcolor}

\newenvironment{enumalph}
{\begin{enumerate}}
	{\end{enumerate}}

\newenvironment{enumalg}
{\begin{enumerate}}
{\end{enumerate}}

\newenvironment{enumalgii}
{\begin{enumerate}}
{\end{enumerate}}

\newenvironment{enumroman}
{\begin{enumerate}}
	{\end{enumerate}}

\numberwithin{equation}{section}

\newtheorem{theorem}[equation]{Theorem}

\newtheorem{prop}[equation]{Proposition}
\newtheorem{proposition}[equation]{Proposition}
\newtheorem{lem}[equation]{Lemma}
\newtheorem{lemma}[equation]{Lemma}
\newtheorem{cor}[equation]{Corollary}
\newtheorem{corollary}[equation]{Corollary}

\theoremstyle{definition}

\theoremstyle{remark}
\newtheorem{remark}[equation]{Remark}

\begin{document}

\author{Oana Padurariu}
\address{Oana Padurariu \\
Max-Planck-Institut für Mathematik Bonn\\
Germany}
\urladdr{https://sites.google.com/view/oanapadurariu/home}
\email{opadurariu@mpim-bonn.mpg.de}

\author{Sun Woo Park}
\address{Sun Woo Park \\
Max-Planck-Institut für Mathematik Bonn\\
Germany}
\urladdr{https://sites.google.com/wisc.edu/spark483}
\email{s.park@mpim-bonn.mpg.de}

\author{John Voight}
\address{John Voight \\ University of Sydney \\ Australia
}
\urladdr{https://jvoight.github.io}
\email{jvoight@gmail.com}

\title[Gain of primes of good reduction]{Unexpected primes of good reduction in quotients of modular and Shimura curves}

\begin{abstract}
We find all zero-dimensional spaces of newforms of weight $2$ and squarefree level $N$ with a fixed Atkin--Lehner sign pattern.  As an application, we classify unexpected primes of good reduction of Atkin--Lehner quotients of modular and Shimura curves of squarefree levels.
\end{abstract}

\maketitle

\section{Introduction}

Modular curves of genus zero, including their Atkin--Lehner quotients, have played a significant role in arithmetic geometry and even in group theory through monstrous moonshine.  Since the genus of a modular curve is the dimension of the space of cuspforms of weight $2$, in this paper we are led to the more general question: which spaces of newforms \emph{with fixed Atkin--Lehner signs} are zero?  

Let $N$ be squarefree and let $W_N \colonequals \{w_d : d \mid N\}$ denote its Atkin--Lehner group, the elementary abelian $2$-group with cardinality $\#W_N = 2^{\omega(N)}$.  A \defi{sign pattern} for $N$ is a character
\[
  \varepsilon_N \colon W_N \to \{\pm 1\},
\]
determined by its values on $w_p$ for $p \mid N$.  Let
\[  S_2^{\new,\varepsilon_N}(N)
  \colonequals \{ f \in S_2^{\new}(\Gamma_0(N)) : w_d f=\varepsilon_N(d)f\text{ for all }d\mid N\} \]
be the Atkin--Lehner $\varepsilon_N$-eigenspace over $\C$ of classical newforms of level $N$ and trivial character.  We call the pair $(N,\varepsilon_N)$ \defi{lacking} if $S_2^{\new,\varepsilon_N}(N)=\{0\}$.  

A basic dimension estimate already quickly shows that there are only finitely many lacking pairs.  Indeed, the main term in the trace formula for $\dim S_2^{\new,\varepsilon_N}(N)$ grows like $N/2^{\omega(N)}$, whereas the Atkin--Lehner trace terms are controlled by class numbers of imaginary quadratic fields and have size roughly $O(\sqrt{N}\log N)$ after summing over divisors.  Thus every sign pattern occurs once $N$ is sufficiently large.  

Our first result is to make this effective and then compute the complete list.

\begin{theorem} \label{theorem:newforms}
Let $N$ be squarefree and let $\varepsilon_N$ be a sign pattern for $N$.  Then the pairs $(N,\varepsilon_N)$ for which
$S_2^{\new,\varepsilon_N}(N)=\{0\}$ are precisely the pairs listed in Tables \textup{\ref{Complete_N_5}}, \textup{\ref{Complete_N_3}}, \textup{\ref{Complete_N_2}}, and \textup{\ref{Complete_N_2.1}}.
\end{theorem}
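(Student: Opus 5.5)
We first derive an explicit formula for $\dim S_2^{\new,\varepsilon_N}(N)$, then turn it into an effective lower bound, and finally finish with a finite computation. Start from the full space $S_2(\Gamma_0(N))$. The projector onto the $\varepsilon_N$-isotypic part is
\[
e_{\varepsilon}=2^{-\omega(N)}\sum_{d\mid N}\varepsilon_N(d)\,w_d ,
\]
so
\[
\dim S_2^{\varepsilon}(N)=2^{-\omega(N)}\sum_{d\mid N}\varepsilon_N(d)\operatorname{tr}\bigl(w_d\mid S_2(N)\bigr).
\]
The trace of $w_d$ for $d>1$ is given by the Yamauchi/Skoruppa--Zagier formula (equivalently, via fixed points of $w_d$ on $X_0(N)$). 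It is a combination of class numbers $h(-4d)$ and $h(-d)$, weighted by local factors $\prod_{p\mid N/d}\bigl(1+\bigl(\tfrac{-d}{p}\bigr)\bigr)$, with small corrections for $d=2,3$. The term $d=1$ gives the genus. To pass to newforms we use Möbius inversion over levels $M\mid N$. For $f$ new of level $M$, the oldforms it generates at level $N$ carry $w_p$ for $p\mid N/M$ acting on a $2$-dimensional space with trace $0$, i.e.\ with both signs. Hence each $\varepsilon_N$-eigenspace of old contributions at level $N$ is determined by the new $\varepsilon_M$-spaces. Inverting yields a formula of the form
\[
\dim S_2^{\new,\varepsilon_N}(N)=2^{-\omega(N)}\Bigl(\frac{\varphi(N)}{12}\cdot(\text{bounded factor})+\sum_{1<d\mid N}\varepsilon_N(d)\,c_{N,d}\,H(d)\Bigr)+O(2^{\omega(N)}\text{-bounded cusp/elliptic terms}),
\]
where the main term comes from $\prod_{p\mid N}(p-1)/12$ and $H$ is a class number expression.

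Next we make the bound effective. We bound the main term below by $\frac{1}{12}\prod_{p\mid N}(p-1)$, up to elliptic-point and cusp contributions that are $O(2^{\omega(N)})$. We bound each $|c_{N,d}H(d)|$ above using an explicit class number bound $h(-D)\le \frac{\sqrt D}{\pi}(\log D+2)$ (e.g.\ from Pólya--Vinogradov type estimates, or Ramaré's explicit $L(1,\chi)$ bounds), together with $|c_{N,d}|\le 2^{\omega(N/d)}$. Summing over $d\mid N$ gives an error of size at most $C\,3^{\omega(N)}\sqrt N\log N$, and positivity follows once
\[
\prod_{p\mid N}(p-1)>C'\,3^{\omega(N)}\sqrt N\log N .
\]
Since $\prod(p-1)/\sqrt N\ge \prod_{p\mid N}(\sqrt p-1/\sqrt p)$, which grows super-exponentially in $\omega(N)$, this bounds $\omega(N)$ (probably by about $8$ or $9$). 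For each fixed $\omega$ it then yields an explicit bound on $N$. The bound should be refined prime by prime: factors with $p$ large only help, so we can bound the largest prime given the smaller ones, recursively. This produces a finite, computationally tractable search space.

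Finally, we enumerate all squarefree $N$ in the search space and all $2^{\omega(N)}$ sign patterns. For each we compute the dimension exactly from the formula, using exact class numbers (PARI/Magma), and record those with dimension zero. We cross-check small levels against Magma/LMFDB newform data with Atkin--Lehner signs. The lacking pairs are then organized by $\omega(N)$ into the four tables. The main obstacle is the effective step. The naive bound leaves $N$ up to perhaps $10^{8}$ or more for small $\omega$, with $2^{\omega}$ patterns each, so the recursive prime-by-prime pruning and sharp explicit class number bounds are what make the computation feasible. If the bound is still too weak, I would first drop the absolute values for $d$ with $\bigl(\tfrac{-d}{p}\bigr)=-1$ for some $p\mid N/d$, since those terms vanish.
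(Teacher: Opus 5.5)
Your plan matches the paper's proof: the paper takes Martin's exact formula for $\dim S_2^{\new,\varepsilon_N}(N)$ (you rederive it by M\"obius inversion from full-space traces of $w_d$ instead of citing it), bounds the class-number terms with an explicit bound, gets a positivity criterion that bounds $\omega(N)$ and then $N$ for each value of $\omega(N)$, and finishes with a filtered enumeration and exact evaluation. Two small corrections: first, the paper's error term is $\kappa(N)2^{\omega(N)}\sqrt N(\log N+4\log 2)$ rather than your $3^{\omega(N)}\sqrt N\log N$, because it uses $2^{\omega(N/d)}\sqrt d\le\kappa(N)\sqrt N$, which shrinks the search space considerably; second, in the new-space formula the $d$-term carries $\prod_{p\mid N/d,\,p\neq 2}\bigl(\bigl(\frac{-d}{p}\bigr)-1\bigr)$, so it vanishes when $\bigl(\frac{-d}{p}\bigr)=+1$ for some odd $p\mid N/d$, not $-1$ as you wrote.
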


The proof of Theorem \ref{theorem:newforms} occupies Section \ref{sec:dimensions}; we use a version of the trace formula adapted for this purpose by Martin \cite{Martin18}.  Weinstein \cite{Weinstein09} proves a result in the same spirit for Hilbert modular forms with prescribed local ramification: up to finitely many exceptions, all prescribed local behaviours occur.  However, prescribing the Atkin--Lehner signs creates smaller spaces, with additional class-number terms in the trace formula; controlling these terms uniformly is the main analytic task here, and then evaluating the remaining finite range is the main computational task.

Our motivation for Theorem~\ref{theorem:newforms} comes from the following example.  The curve $X_0(194)$ has bad reduction at $2$ but its Atkin--Lehner quotient $X_0^*(194) = X_0(194)/W_{194}$ of genus $3$ has \emph{good} reduction at $2$, and therefore so does its Jacobian!  Our second result classifies all such unexpected (or gained) primes of good reduction for Atkin--Lehner quotients of Shimura curves of squarefree level.  

\begin{theorem} \label{theorem:modularshimura}
Let $D,N$ be coprime squarefree integers with $\omega(D)$ even (allowing $D=1$), and let $X_0^*(D;N) \colonequals X_0(D;N)/W_{DN}$ be the full Atkin--Lehner quotient of the modular or Shimura curve of discriminant $D$ and Eichler level $N$.  Then the following statements hold.
\begin{enumalph}
\item The curve $X_0^*(D;N)$ has good reduction at $p$ if and only if its Jacobian has good reduction at $p$.  
\item If $X_0^*(D;N)$ has genus $>0$ and $p \mid D$, then $X_0^*(D;N)$ has bad reduction at $p$.  
\item The triples $(D,N,p)$ for primes $p \mid DN$ such that $g(X_0^*(D;N))>0$ and $X_0^*(D;N)$ has good reduction at $p$ are precisely those listed in Tables \textup{\ref{CM_points_table}} and \textup{\ref{Shimura_points_table}}.
\end{enumalph}
\end{theorem}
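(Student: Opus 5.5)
We prove the three parts in turn; Theorem~\ref{theorem:newforms} reduces part (c) to a finite computation.

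\textbf{Part (a).} The curve $X = X_0^*(D;N)$ is a smooth projective curve over $\Q$. Assume $g(X)>0$, since in genus $0$ the claim is a statement about conics with points and is handled separately. If $X$ has good reduction at $p$, then $\Jac(X)$ does as well, because the Jacobian of a smooth proper model is an abelian scheme. For the converse we use the Deligne--Mumford criterion. If $\Jac(X)$ has good reduction, then $X$ has potentially good reduction in the stable sense, and the stable model over $\Z_p^{\mathrm{ur}}$ is tree-like with exactly one component of positive genus. To conclude that $X$ itself has good reduction, and not merely that its Jacobian does, we compute the special fibre directly. At $p\mid N$ we use the Deligne--Rapoport model of $X_0(D;N)$; at $p\mid D$ we use the Cerednik--Drinfeld model. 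We then take the quotient by $W_{DN}$ and check that, whenever the quotient's Jacobian has toric rank $0$, the dual graph of the quotient fibre contracts to a single smooth curve of genus $g(X)$ with no rational tails left over. The toric rank of the quotient is the dimension of the $W$-invariant part of $H_1$ of the dual graph. Equivalently, good reduction of $\Jac(X)$ at $p$ means that the $p$-new part of $\Jac(X_0(D;N))^{W}$ vanishes.

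\textbf{Part (b).} Let $p\mid D$. The Cerednik--Drinfeld uniformization makes $\Jac(X_0(D;N))$ purely toric at $p$. The invariant part $\Jac(X)$ is an isogeny factor of this Jacobian. A nonzero isogeny factor of a torus is itself toric, so it has positive toric rank and cannot have good reduction. By (a), $X$ has bad reduction whenever $g>0$.

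\textbf{Part (c).} Now let $p\mid N$. Decompose $\Jac(X)$ up to isogeny via Jacquet--Langlands into old and new pieces indexed by $M\mid DN$ with $D\mid M$. The piece attached to a newform $f$ of level $M$ has bad (toric) reduction at $p$ exactly when $p\mid M$. So good reduction at $p$ is equivalent to the following vanishing condition. For every $M$ with $pD\mid M\mid DN$, there is no newform of level $M$ that is fixed by $W_{DN}$ in the appropriate sense. For $f$ of level $M$, the $W_{DN}$-invariant vectors in the oldspace generated by $f$ come from the sign pattern $\varepsilon_M$ of $f$, with signs $-1$ for $q\mid D$ via Jacquet--Langlands, and this pattern must match the prescribed one on $W_M$. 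The extra factors $w_q$ for $q\mid N/M$ always contribute an invariant line. Hence the condition becomes: for every such $M$, the pair $(M,\varepsilon_M)$ is lacking, where $\varepsilon_M$ is the pattern with $\varepsilon(q)=+1$ for $q\mid N$ and the corresponding shifted sign for $q\mid D$. Theorem~\ref{theorem:newforms} gives the finite list of lacking pairs. We therefore enumerate all $(D,N,p)$ for which every intermediate $(M,\varepsilon_M)$ lies in Tables~\ref{Complete_N_5}--\ref{Complete_N_2.1}, discard those with $g(X)=0$, and apply (a) to obtain Tables~\ref{CM_points_table} and~\ref{Shimura_points_table}.

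\textbf{Main obstacle.} The hardest step is the converse direction of (a), excluding the possibility that $\Jac(X)$ has good reduction while $X$ does not. For this we plan to examine the quotient of the Deligne--Rapoport fibre (two copies of $X_0(N/p)$ glued at supersingular points) by $w_p$. Since $w_p$ swaps the two components, the quotient fibre is one copy of $X_0^*(D;N/p)$-type curve with supersingular points self-glued. Each node pair becomes either an identified pair, which is a loop contributing toric rank, or a fixed point of $w_p$, which is harmless after normalization. Toric rank $0$ then forces the quotient to be smooth after contracting genus-$0$ components.
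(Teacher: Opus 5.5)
Your proposal is correct and follows the paper's route. For (a) at $p\mid N$, the key point is exactly that $w_p$ swaps the two Deligne--Rapoport components, so the (semistable) quotient fibre is irreducible, every node is a loop, and toric rank $0$ forces smoothness. Part (b) rests on the same fact as the paper, that every isogeny factor has multiplicative reduction at $p\mid D$: you get it from Cerednik--Drinfeld, the paper from $p\mid M$ for all $D\mid M$. Part (c) is Lemma~\ref{lem:good-reduction-newforms} combined with the tables.

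One correction: your aside that good reduction of the Jacobian makes the stable model a tree with exactly one positive-genus component is false in general, since compact-type fibres can have several positive-genus components. You never actually use it, though, because irreducibility of the quotient fibre does all the work.
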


Indeed, the Jacobian $\Jac(X_0^*(D;N))$ decomposes up to isogeny over $\Q$ indexed by newforms of level dividing $DN$ that are new at $D$ with Atkin--Lehner eigenvalues specified by the sign pattern $\varepsilon$ for $DN$ with $\varepsilon(w_p)=-1,+1$ according as $p \mid D$ or $p \mid N$.  Section \ref{sec:gained-primes} explains this criterion and then deduces Theorem~\ref{theorem:modularshimura} using Theorem~\ref{theorem:newforms}.

The calculations in this paper deliberately focus on the cleanest case, the one relevant to our application: squarefree level, weight $2$, and trivial character.  Several natural extensions should be possible.  The first is to allow arbitrary (nonsquarefree) level: there are only additional technicalities, with a more complicated trace formula and more local bookkeeping.  The second would be to allow arbitrary nebentypus character, including signs at primes where the local component of the character $\chi$ is at most quadratic.  Third, we could allow higher weight $k \geq 3$; the analysis is similar and a bit easier, with main term now the larger $(k-1)(N/2^{\omega(N)})$.  Finally, one can ask for the analogous result for Hilbert modular forms over totally real fields, including a decomposition according to class group characters.  

All computations implemented in this paper are performed in Magma \cite{MR1484478}. We used a database of class numbers of imaginary quadratic extensions from LMFDB \cite{lmfdb} and Kimball Martin's implementation of trace formula \cite{Martin18}.  We refer to the Github repository \cite{PPV_github} for all related codes, computation time, and outputs.

\subsection*{Acknowledgements}

The authors would like to thank Kimball Martin for helpful conversations and for the hospitality of the Lodha Mathematical Sciences Institute (LMSI) where some of this work was performed. We are also grateful for the mathematical discussions with Nikola Adžaga and Freddy Saia. Voight was supported by the Simons Foundation (550029) and (SFI-MPS-Infra\-structure-00008650). Padurariu and Park would like to thank Max Planck Institute for Mathematics for their hospitality and computational resources.

\section{Dimensions of weight 2 newforms}
\label{sec:dimensions}

In this section, we prove Theorem~\ref{theorem:newforms}.  Let $N\in\Z_{\geq 1}$ be squarefree and let $\varepsilon_N$ be a sign pattern for $N$.  For $n\in\Z_{\geq 1}$, let $\omega(n)$ be the number of distinct prime factors of $n$, and let $\omega'(n)$ be the number of distinct odd prime factors of $n$.

\subsection*{Dimension formula}

We use explicit formulas for dimensions of newspaces with prescribed Atkin--Lehner signs from Martin \cite{Martin18}.  For $d>0$ squarefree, let $\Delta_d$ be the discriminant of $K_d \colonequals \Q(\sqrt{-d})$, and let $h'(\Delta_d)$ be its modified class number, the usual class number of the ring of integers unless $d=1,3$, where instead $h'(\Delta_1)=1/2$ and $h'(\Delta_3)=1/3$.  

\begin{prop} \label{prop:asplit}
We have
\begin{align} \label{eqn:martin-lower-bound}
\begin{split}
    \dim S_{2}^{\new, \varepsilon_N}(N) \geq \frac{1}{2^{\omega(N)}} \biggl[ &\dim S_2^{\new}(N) - 2^{\omega(N)} \\
    &- \delta[2 \mid N]   \left(h'(\Delta_2)   |b(2,N/2)|   2^{\omega(N/2)-1} + \frac{1}{2} 2^{\omega(N/2)} \right) \\
    &- \delta[3 \mid N]   \left(h'(\Delta_3)   |b(3,N/3)|   2^{\omega(N/3)-1} + \frac{2}{3} 2^{\omega(N/3)}  \right) \\ 
    &- \frac{1}{2}\sum_{\substack{d > 3 \\ d \mid N}} |b(d,N/d)|   h'(\Delta_d)   2^{\omega'(N/d)} \biggr],
\end{split}
\end{align}
where $\delta[P]=1$ if $P$ is true and $0$ otherwise, and
\begin{equation} \label{eqn:bmn}
    b(m,n) \colonequals \begin{cases}
    1, &\text{ if } m \equiv 1,2,5,6 \pmod{8} \text{ and } n \equiv 1 \pmod{2}; \\
    -1, &\text{ if } m \equiv 1,2,5,6 \pmod{8} \text{ and } n \equiv 0 \pmod{2}; \\
    4, &\text{ if } m \equiv 3 \pmod{8} \text{ and } n \equiv 1 \pmod{2}; \\
    -2, &\text{ if } m \equiv 3 \pmod{8} \text{ and } n \equiv 0 \pmod{2}; \\
    2, &\text{ if } m \equiv 7 \pmod{8} \text{ and } n \equiv 1 \pmod{2}; \\
    0, &\text{ if } m \equiv 7 \pmod{8} \text{ and } n \equiv 0 \pmod{2}.
    \end{cases}
\end{equation}
\end{prop}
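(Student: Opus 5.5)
We will start from Martin's exact formula \cite{Martin18} for the dimension of the Atkin--Lehner eigenspace. Since $W_N$ acts on $S_2^{\new}(N)$, the projector onto the $\varepsilon_N$-eigenspace is $2^{-\omega(N)}\sum_{d\mid N}\varepsilon_N(d)w_d$. Taking traces gives
\[
\dim S_2^{\new,\varepsilon_N}(N)=\frac{1}{2^{\omega(N)}}\sum_{d\mid N}\varepsilon_N(d)\operatorname{tr}\bigl(w_d \mid S_2^{\new}(N)\bigr).
\]
The $d=1$ term is $\dim S_2^{\new}(N)$. For $d>1$, Martin (via the Eichler--Selberg trace formula for $w_d$ on the full space $S_2(N)$, followed by M\"obius inversion to pass to the newspace) writes $\operatorname{tr}(w_d\mid S_2^{\new}(N))$ explicitly. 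It has two parts: a class number term proportional to $b(d,N/d)\,h'(\Delta_d)$ times a local factor over the primes of $N/d$, and a bounded correction coming from the elliptic/Eisenstein contributions. The plan is to quote this formula and bound each term in absolute value, taking the worst case $\varepsilon_N(d)\operatorname{tr}(w_d)=-|\operatorname{tr}(w_d)|$. That immediately yields an inequality of the shape in \eqref{eqn:martin-lower-bound}.

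The bookkeeping goes as follows.
\begin{enumerate}
\item For generic $d>3$, recall Martin's expression. The trace of $w_d$ counts fixed points, which are CM points by orders of discriminant $-d$ or $-4d$. The number of these is $b(d,N/d)h'(\Delta_d)$ times a product over primes $p\mid N/d$ of local embedding counts $1+\left(\frac{\Delta}{p}\right)\in\{0,1,2\}$. After the newform sieve, the local factor at each odd $p\mid N/d$ is bounded by $2$ in absolute value. This gives $\tfrac12|b(d,N/d)|h'(\Delta_d)2^{\omega'(N/d)}$, where the factor $\tfrac12$ is the usual $-\tfrac12\#\mathrm{Fix}$ normalization in weight $2$. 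The prime $2\mid N/d$ is absorbed into the case distinction in $b(d,N/d)$ through the parity of $n$, which explains the definition \eqref{eqn:bmn}: the values $1,4,2$ versus $-1,-2,0$ reflect whether $2$ splits, is inert, or ramifies in the relevant orders.
\item For $d=2,3$, extra fixed points occur: $\pm i$-type and $\rho$-type points with stabilizers of order $4$ and $6$, plus contributions from cusps and the constant (weight $2$) correction. We bound these by the explicit additional terms $\tfrac12 2^{\omega(N/2)}$ and $\tfrac23 2^{\omega(N/3)}$. We keep the sharper exponent $\omega(N/d)-1$ on the class number term, as Martin's formula gives.
\item The remaining $O(1)$ contribution in each trace is the weight-$2$ correction $\pm1$ coming from the Eisenstein/constant term in the trace formula (the $\delta_{k=2}$ term). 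It is bounded by $1$ for each $d$, so summing over all $d\mid N$ gives the $-2^{\omega(N)}$ term.
\end{enumerate}
Adding these bounds and dividing by $2^{\omega(N)}$ gives the stated inequality.

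The main obstacle is the passage from the full space to the newspace. The trace of $w_d$ on oldforms from level $M\mid N$ involves $w_d$ acting through the degeneracy maps, so the trace on $S_2(N)$ is a convolution of newspace traces with local factors. Inverting this could a priori enlarge the local constants. I would handle it prime by prime using multiplicativity: at $p\mid N/d$, the inversion replaces the local factor $1+\left(\frac{\Delta}{p}\right)$ by $\left(\frac{\Delta}{p}\right)-1$ or something of similar form, which still has absolute value at most $2$. This is the step where I would check Martin's formula most carefully, especially at $p=2$, where the orders of discriminant $-d$ versus $-4d$ interact. Those cases are exactly the ones producing the six cases of $b(m,n)$.
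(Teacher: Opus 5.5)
Your proposal is correct and is essentially the paper's argument. The paper's proof likewise combines Martin's projector identity $\dim S_2^{\new,\varepsilon_N}(N)=2^{-\omega(N)}\sum_{d\mid N}\varepsilon_N(d)\operatorname{tr}(W_d)$ with his explicit newspace trace formula and replaces each trace term by minus its absolute value; Martin already states the traces on the newspace, so the M\"obius-inversion step you flag as the main obstacle does not need to be redone.
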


\begin{proof}
Combine Martin \cite[Proposition 1.4, Proposition 3.2, Theorem 3.3]{Martin18}, taking absolute values of the trace terms which can appear with either sign.
\end{proof}

The main term in Proposition~\ref{prop:asplit} is the dimension of the full newspace.

\begin{lem} \label{lem:s2nnew}
We have
\[
  \dim S_2^{\new}(N)=\frac{\varphi(N)}{12}-\frac{e_4}{4}-\frac{e_3}{3}+\mu(N),
\]
where $e_D=\prod_{p\mid N}\bigl(\left(\frac{-D}{p}\right)-1\bigr)$ for $D=3,4$, and $\mu$ is the M\"obius function.
\end{lem}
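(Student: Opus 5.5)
The plan is to start from the classical genus formula for $X_0(N)$ and then pass to the newspace by M\"obius inversion, using that every ingredient is multiplicative in $N$ for squarefree $N$. Recall that for squarefree $M$,
\[
\dim S_2(\Gamma_0(M)) = g(X_0(M)) = 1+\frac{\psi(M)}{12}-\frac{\nu_2(M)}{4}-\frac{\nu_3(M)}{3}-\frac{\nu_\infty(M)}{2},
\]
where $\psi(M)=\prod_{p\mid M}(p+1)$, $\nu_2(M)=\prod_{p\mid M}\bigl(1+\bigl(\frac{-4}{p}\bigr)\bigr)$, $\nu_3(M)=\prod_{p\mid M}\bigl(1+\bigl(\frac{-3}{p}\bigr)\bigr)$ and $\nu_\infty(M)=2^{\omega(M)}$. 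By Atkin--Lehner--Li theory, $S_2(\Gamma_0(N))=\bigoplus_{M\mid N}\bigoplus_{t\mid N/M} S_2^{\new}(M)|B_t$. Hence $\dim S_2(N)=\sum_{M\mid N} d(N/M)\dim S_2^{\new}(M)$, with $d$ the divisor function. For squarefree $N$ the inverse of convolution with $d$ is $\beta=\mu*\mu$, which is multiplicative with $\beta(p)=-2$. This gives
\[
\dim S_2^{\new}(N)=\sum_{M\mid N}\beta(N/M)\dim S_2(M).
\]

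Next I substitute the genus formula term by term. Each of $\psi,\nu_2,\nu_3,\nu_\infty$ and the constant $1$ is multiplicative, so each convolution with $\beta$ factors over the primes $p\mid N$ as $f(p)-2$ (since $f(1)=1$). The local factors are:
\[
(p+1)-2=p-1, \qquad \Bigl(1+\bigl(\tfrac{-D}{p}\bigr)\Bigr)-2=\bigl(\tfrac{-D}{p}\bigr)-1, \qquad 2-2=0, \qquad 1-2=-1.
\]
The first yields $\varphi(N)/12$, the second yields $-e_4/4$ and $-e_3/3$, and the constant term yields $\prod_{p\mid N}(-1)=\mu(N)$. The cusp term dies whenever $N>1$. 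One also has to check that the formula remains valid at $p=2,3$ and at $N=1$. For $N=1$ the cusp term $-1/2$ gives $1+1/12-1/4-1/3-1/2=0$, which matches $\varphi(1)/12-1/4-1/3+1=1/2$ only after the correction: the cusp contribution at $N=1$ is $-\tfrac12$ rather than $0$. So I would treat $N=1$ separately, or note that the paper only applies the lemma to $N>1$. I expect the statement is intended for $N>1$, since for $N=1$ the formula as written gives $1/2$ rather than $0$.

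The main obstacle is bookkeeping rather than anything conceptual. The points to get right are: justifying the inversion $d^{-1}=\mu*\mu$ via the oldform decomposition; checking that the elliptic-point counts at $p=2$ and $p=3$ are consistent with the Legendre/Kronecker symbol convention $\bigl(\frac{-4}{2}\bigr)=\bigl(\frac{-3}{3}\bigr)=0$, which gives local factor $-1$ there; and handling the degenerate case $N=1$ noted above.
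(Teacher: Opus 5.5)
Your proposal is correct, but it takes a different route from the paper. The paper gives no derivation and simply cites Martin \cite[Theorem 2.1]{Martin18}, who in turn cites \cite[Theorem 4]{Martin05}. You instead reprove the formula:
\begin{itemize}
\item You start from the genus formula for $X_0(M)$ with $M$ squarefree, and use the Atkin--Lehner--Li decomposition to get $\dim S_2(N)=\sum_{M\mid N} d(N/M)\dim S_2^{\new}(M)$.
\item You invert this with $\beta=\mu*\mu$.
\item Since every ingredient is multiplicative, each term collapses to $\prod_{p\mid N}(f(p)-2)$. This includes the constant $1$ and the values $\psi(1)=\nu_2(1)=\nu_3(1)=\nu_\infty(1)=1$ needed for the $M=1$ summand.
\end{itemize}
Your local factors $p-1$, $\bigl(\frac{-D}{p}\bigr)-1$, $0$ and $-1$ are right. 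At $p=2$ you also need the Kronecker value $\bigl(\frac{-3}{2}\bigr)=-1$, so that $\nu_3(2)=0$; you only mention the zero values.

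Your argument is essentially the standard multiplicative derivation behind the cited formula. The citation buys brevity. Your route buys self-containment and exposes a real caveat: the displayed identity omits the cusp contribution $-\tfrac12\prod_{p\mid N}(2-2)$, which is $-\tfrac12$ at $N=1$. As literally stated, the lemma gives $\tfrac12$ rather than $0$ there, so it should be read for $N>1$. This is harmless for the paper, whose estimates only concern $N>1$.
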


\begin{proof}
See Martin \cite[Theorem 2.1]{Martin18} (citing \cite[Theorem 4]{Martin05}).
\end{proof}

\subsection*{Estimates}

We now isolate the estimates used to make the finite computation effective.

\begin{lem} \label{lem:coarse-lower-bound}
Let $m=\omega(N)$.  Define
\begin{equation} \label{eqn:A-def}
A(N) \colonequals \begin{cases}
        \frac{19}{12}2^m+1, &\text{ if } N \equiv \pm 1 \pmod{6}; \\
        \frac{9}{4}2^m+1, &\text{ if } N \equiv 3 \pmod{6}; \\
        3\cdot 2^m+1, &\text{ if } N \equiv 0 \pmod{2}.
    \end{cases}
\end{equation}
Then for every sign pattern $\varepsilon_N$,
\begin{equation} \label{eqn:coarse-lower-bound}
\dim S_{2}^{\new,\varepsilon_N}(N)
\geq
\frac{1}{2^m}\left(
\frac{\varphi(N)}{12}-A(N)
-\frac{2}{\pi}\sum_{\substack{d>1\\d\mid N}}
\sqrt d\log(4d)2^{\omega(N/d)}
\right).
\end{equation}
\end{lem}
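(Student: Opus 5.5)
We start from the lower bound \eqref{eqn:martin-lower-bound} of Proposition~\ref{prop:asplit} and feed in the formula for $\dim S_2^{\new}(N)$ from Lemma~\ref{lem:s2nnew}. Every constant or "elliptic" term will be bounded by a piece of $A(N)$, and every class number term by the analytic sum $\frac{2}{\pi}\sum_{d>1}\sqrt d\log(4d)2^{\omega(N/d)}$. The factor $1/2^m$ is common to both sides, so it suffices to bound the bracketed quantity from below.

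\emph{Step 1: the main term.} From Lemma~\ref{lem:s2nnew}, $\dim S_2^{\new}(N)\ge \varphi(N)/12-|e_4|/4-|e_3|/3-1$.
\begin{itemize}
\item[(i)] If some $p\mid N$ has $\left(\frac{-D}{p}\right)=1$, then $e_D=0$. Otherwise each factor has absolute value at most $2$, so $|e_D|\le 2^m$.
\item[(ii)] If $2\mid N$, then $\left(\frac{-4}{2}\right)=0$ and $\left(\frac{-3}{2}\right)=-1$. Hence the $p=2$ factor of $e_4$ is $-1$ and that of $e_3$ is $-2$. Similarly, if $3\mid N$ the $p=3$ factor of $e_3$ is $-1$.
\end{itemize}
Combining these with the $-2^m$ term of \eqref{eqn:martin-lower-bound} and the $-1$ from $\mu(N)$ gives the following.
\begin{itemize}
\item[(a)] For $N\equiv\pm1\pmod 6$: $2^m/4+2^m/3+2^m+1=\frac{19}{12}2^m+1$, which is exactly $A(N)$.
\item[(b)] For $3\mid N$, $N$ odd: $e_3$ is at most $2^{m-1}$ in size. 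In addition, the $\delta[3\mid N]$ line of \eqref{eqn:martin-lower-bound} contributes the constant $\frac23 2^{\omega(N/3)}=\frac13 2^m$, plus a class number part $h'(\Delta_3)|b(3,N/3)|2^{m-2}=\frac13\cdot 4\cdot 2^{m-2}=\frac13 2^m$. I would absorb all of these into the constant term, giving $\frac14+\frac16+1+\frac13+\frac13+\dots$. This needs checking against $\frac94$. The tally above is $\frac{25}{12}$, which is below $\frac{27}{12}=\frac94$, so it leaves slack.
\item[(c)] For even $N$: the analogous bookkeeping with the $d=2$ line is needed. Here $h'(\Delta_2)=1$, $|b(2,\cdot)|=1$, giving $2^{m-2}$, plus $\frac12 2^{m-1}$. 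Together with the $d=6$ term if desired, this should fit under $3\cdot 2^m$.
\end{itemize}

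\emph{Step 2: the remaining class number terms.} For each $d>3$ dividing $N$, I need
\[
\tfrac12|b(d,N/d)|h'(\Delta_d)2^{\omega'(N/d)}\le \tfrac{2}{\pi}\sqrt d\log(4d)2^{\omega(N/d)}.
\]
Since $|b|\le 4$ and $\omega'\le\omega$, it is enough to bound $|b|\,h(\Delta_d)/2$. I will use the class number formula $h(\Delta)=\frac{w\sqrt{|\Delta|}}{2\pi}L(1,\chi_\Delta)$ together with the elementary Pólya–Vinogradov-type bound $L(1,\chi)\le \log|\Delta|+1$, or more sharply $\frac12\log|\Delta|+O(1)$.
\begin{itemize}
\item For $d\equiv 1,2\pmod 4$, $|\Delta_d|=4d$ and $|b|\le 1$. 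This gives $h\le \frac{\sqrt{4d}}{\pi}\log(4d)=\frac{2}{\pi}\sqrt d\log(4d)$, and the extra $\frac12$ leaves room.
\item For $d\equiv 3\pmod 4$, $|\Delta_d|=d$ but $|b|$ can be $4$ (when $d\equiv3\pmod 8$) or $2$ (when $d\equiv 7\pmod 8$). This is consistent with the $2$-adic relation $h(-4d)=h(-d)\cdot(2-\chi(2))$. I then need $2h(\Delta_d)\le\frac2\pi\sqrt d\log(4d)$, that is, $h(-d)\le\frac{1}{\pi}\sqrt d\log(4d)$. This follows from $L(1,\chi)\le\frac12\log d+c$ with small $c$, since $\log(4d)=\log d+\log 4$.
\end{itemize}

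\emph{Step 3: the terms $d=2,3$ inside the sum.} The terms $d=2,3$ in the final sum (over $d>1$) are nonnegative, so they are simply extra slack. Alternatively, the class number parts of the $d=2,3$ lines can be placed there instead of in $A(N)$; either way the inequality holds.

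The main obstacle is Step 2 in the case $d\equiv 3\pmod 8$, where $|b|=4$. Here I need an explicit bound of the form $L(1,\chi_{-d})\le\frac12\log d+\log 2$, valid for all $d$. I would try the standard partial summation bound $|L(1,\chi)|\le\frac12\log|\Delta|+\log\log|\Delta|$-free variants, e.g. Louboutin's $L(1,\chi)\le\frac12(\log|\Delta|+\kappa)$, and verify small $d$ directly by computation.
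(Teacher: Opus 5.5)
Your route is the paper's: substitute Lemma~\ref{lem:s2nnew} into Proposition~\ref{prop:asplit}, absorb $\mu(N)$, the $e_D$ terms, the $-2^m$ and the exceptional $d=2,3$ lines into $A(N)$, bound the $d>3$ class-number terms analytically, and extend the sum to $d>1$. Cases (a) and (b) are correct; your sharper $|e_3|\le 2^{m-1}$ gives $\frac{25}{12}$ where the paper gets $\frac{27}{12}$. The paper uses $|e_D|\le 2^m$ throughout, and it cites Louboutin's explicit bound $h(\Delta_d)\le\frac{\sqrt d}{\pi}\log(4d)$ together with $|b|\le 4$ uniformly, instead of splitting by $d \bmod 4$ through the class number formula.

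Two things need fixing. First, in (c) you leave the case $6\mid N$ unfinished. The $\delta[3\mid N]$ line of \eqref{eqn:martin-lower-bound} is also present for even $N$, and it is this line that must be absorbed into $A(N)$. Your ``$d=6$ term'' lies in the $d>3$ sum and is already handled in Step 2. Since $N/3$ is even, $|b(3,N/3)|=2$, so the line contributes $\frac13\cdot 2\cdot 2^{m-2}+\frac23 2^{m-1}=\frac12 2^m$. Adding the $\frac12 2^m$ from the $d=2$ line, the total coefficient is at most $\frac{19}{12}+\frac12+\frac12=\frac{31}{12}<3$, which is exactly the paper's tally.

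Second, the ``main obstacle'' in Step 2 comes from a factor-of-two slip. For $d>3$ with $d\equiv 3\pmod 4$ you have $h(-d)=\frac{\sqrt d}{\pi}L(1,\chi_{-d})$. So $h(-d)\le\frac1\pi\sqrt d\log(4d)$ only requires $L(1,\chi_{-d})\le\log d+\log 4$, not $\frac12\log d+\log 2$. Partial summation gives this directly: use $\bigl|\sum_{n\in I}\chi(n)\bigr|\le\varphi(q)/2$ for every interval $I$, and cut the series at $x=\varphi(q)/2$. This yields $|L(1,\chi)|\le \log q+2-\log 2<\log q+\log 4$ for every conductor in range. The same bound covers $d\equiv 1,2\pmod 4$ with room to spare. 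So you need neither a Louboutin-strength estimate nor a computer check of small $d$.
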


\begin{proof}
Substitute Lemma~\ref{lem:s2nnew} into Proposition~\ref{prop:asplit}.  Since $\mu(N)\geq -1$ and
\begin{equation}
  \left|e_D\right|
  =\prod_{p\mid N}\left|\left(\frac{-D}{p}\right)-1\right|
  \leq 2^m
\end{equation}
the contribution from the full newspace formula, together with the term $-2^m$ in Proposition~\ref{prop:asplit}, is bounded below by
\begin{equation}
  \frac{\varphi(N)}{12}-\left(1+\frac14+\frac13\right)2^m-1
  =\frac{\varphi(N)}{12}-\frac{19}{12}2^m-1.
\end{equation}
This gives the first case of $A(N)$, because then neither the $d=2$ nor the $d=3$ exceptional term occurs.

If $N\equiv 3\pmod 6$, then $3\mid N$ and $2\nmid N$.  The $d=3$ term contributes at most
\[
  h'(\Delta_3)|b(3,N/3)|2^{m-2}+\frac23 2^{m-1}
  =\frac13\cdot 4\cdot 2^{m-2}+\frac23 2^{m-1}
  =\frac23 2^m,
\]
using $h'(\Delta_3)=1/3$ and $|b(3,N/3)|=4$.  Adding this to $\frac{19}{12}2^m+1$ gives $\frac94 2^m+1$.  Similarly, if $N$ is even then the $d=2$ term contributes at most $2^{m-1}$ since $h'(\Delta_2)=1$ and $|b(2,N/2)|=1$; if $3 \mid N$ then the $d=3$ term contributes at most 
\[
  h'(\Delta_3)|b(3,N/3)|2^{m-2}+\frac23 2^{m-1}
  =\frac13\cdot 2\cdot 2^{m-2}+\frac23 2^{m-1}
  =\frac12 2^m.
\]
If $3\nmid N$, there is no contribution.  Thus the total coefficient of $2^m$ is at most
\[
  \frac{19}{12}+\frac12+\frac12=\frac{31}{12}<3,
\]
which gives the stated even case.

To control the remaining sum, from \eqref{eqn:bmn}, we have $|b(d,N/d)|\leq 4$; also $\omega'(N/d)\leq \omega(N/d)$.  The Louboutin bound \cite[Corollary 2]{Lou04}
\begin{equation} \label{eqn:louboutin-bound}
  h(\Delta_d)=h'(\Delta_d)\leq \frac{\sqrt d}{\pi}\log(4d)
\end{equation}
for $d>3$ therefore gives
\begin{equation}
  \frac12 |b(d,N/d)|h'(\Delta_d)2^{\omega'(N/d)}
  \leq \frac{2}{\pi}\sqrt d\log(4d)2^{\omega(N/d)}.
\end{equation}
Extending the resulting sum from $d>3$ to $d>1$ only increases the error term, so \eqref{eqn:coarse-lower-bound} follows.
\end{proof}

\begin{lem} \label{lem:divisor-sum-bound}
Let $m=\omega(N)$ and put
\[
  \kappa(N)\colonequals
  \begin{cases}
    1, &\text{if } N\equiv \pm1\pmod 6;\\
    2, &\text{otherwise.}
  \end{cases}
\]
Then
\begin{equation} \label{eqn:divisor-sum-bound}
\sum_{\substack{d>1\\d\mid N}}
\sqrt d\log(4d)2^{\omega(N/d)}
\leq
\kappa(N)2^{m-1}\sqrt N\left(\log N+4\log 2\right).
\end{equation}
\end{lem}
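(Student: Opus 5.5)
The plan is to reduce the divisor sum to an Euler product and then compare it with $2^{m-1}\sqrt N$ prime by prime. First I bound $\log(4d)$ uniformly in $d$. Then I use multiplicativity:
\[
  \sum_{d\mid N}\sqrt d\,2^{\omega(N/d)}=\prod_{p\mid N}(\sqrt p+2),
\qquad\text{so}\qquad
  \sum_{\substack{d>1\\ d\mid N}}\sqrt d\,2^{\omega(N/d)}=\prod_{p\mid N}(\sqrt p+2)-2^m .
\]
Dividing by $2^{m-1}\sqrt N$ turns the inequality into a statement about
\[
  F(N)\colonequals 2\prod_{p\mid N}\Bigl(\frac12+\frac1{\sqrt p}\Bigr)-\frac{2}{\sqrt N}.
\]
The local factor $\frac12+\frac1{\sqrt p}$ equals about $1.207$ for $p=2$, about $1.077$ for $p=3$, and is at most $1/2+1/\sqrt5<0.948$ for $p\geq5$. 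It also decreases to $1/2$ as $p$ grows. This is why $\kappa(N)$ must be $2$ when $2\mid N$ or $3\mid N$: those factors exceed $1$. For $p\ge5$ the product decays geometrically in $m$.

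Bounding every $\log(4d)$ by $\log(4N)=\log N+2\log 2$ gives
\[
  \sum_{\substack{d>1\\ d\mid N}}\sqrt d\log(4d)\,2^{\omega(N/d)}
  \le 2^{m-1}\sqrt N\,F(N)\,(\log N+2\log2).
\]
It therefore suffices to prove
\[
  F(N)\le \kappa(N)\,\frac{\log N+4\log2}{\log N+2\log 2}.
\]
The right-hand factor $(\log N+4\log2)/(\log N+2\log2)$ exceeds $1$ but tends to $1$ as $N\to\infty$.

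I would argue case by case.

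\textbf{Case $m=1$.} Then $N=p$ and $F=1$ exactly, so the inequality holds. Indeed here the only term is $\sqrt p\log(4p)\le\sqrt p(\log p+4\log2)$.

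\textbf{Case $N\equiv\pm1\pmod 6$, $m\ge 2$.} Here $F(N)\le 2(0.948)(0.878)\cdots$. For $m\ge3$ it is already below $1$, since the product picks up a factor $1/2+1/\sqrt{11}<0.81$. That leaves $m=2$, where the crude bound can fail narrowly (e.g.\ $N=35$).

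\textbf{Case $2\mid N$ or $3\mid N$.} With $\kappa=2$, the factors $1.207\cdot1.077$ are absorbed once one more prime $p\ge5$ is present. Only a few small shapes need separate treatment, such as $N=2,3,6$ and $N=2p,3p,6p$ with $p$ small.

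For the leftover small cases I would refine the logarithm bound rather than use $\log(4N)$ for every term. Only the top term $d=N$ gets $\log(4N)$. Every proper divisor satisfies $d\le N/q$, where $q$ is the smallest prime of $N$, so those terms get $\log(4N/q)$. This gives
\[
  \sum_{\substack{d>1\\ d\mid N}}\sqrt d\log(4d)\,2^{\omega(N/d)}
  \le \sqrt N\log(4N)+\log(4N/q)\Bigl(\prod_{p\mid N}(\sqrt p+2)-\sqrt N-2^m\Bigr).
\]
This refinement handles $m=2$ in the coprime-to-$6$ case. For example, at $N=35$ it gives about $60$ against a right side of about $75$. It should also handle the handful of remaining shapes in the $\kappa=2$ case. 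Since the ratio $\bigl(\prod(\sqrt p+2)-\sqrt N-2^m\bigr)/(2^{m-1}\sqrt N)$ decreases as each prime grows, checking the smallest admissible primes in each shape suffices.

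The main obstacle is exactly this boundary region: $m=2$ with two primes $\ge5$, and $\kappa=2$ with one of $2,3$ plus one further prime. There the uniform product estimate is not strong enough and the $4\log2$ slack is only marginal. I would first try the refined top-term split above. If it still fails for some shape, I would verify the few smallest values of $N$ numerically, using monotonicity in the primes to cover all larger ones.
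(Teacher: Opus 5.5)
\textbf{Gap in the case analysis.} The setup is right: $\sum_{d\mid N}\sqrt d\,2^{\omega(N/d)}=\prod_{p\mid N}(\sqrt p+2)$, and bounding every $\log(4d)$ by $\log(4N)$ reduces the lemma to $F(N)\le\kappa(N)\frac{\log N+4\log2}{\log N+2\log2}$. The problem is your claim that $F(N)<1$ for $N$ coprime to $6$ with $m\ge3$. It drops the leading factor $2$: $2(0.948)(0.878)(0.81)\approx1.35$. In fact $F(385)\approx1.231$, $F(455)\approx1.199$ and $F(5005)\approx1.008$.

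At $N=385$ and $N=455$ the crude inequality genuinely fails. There $\frac{\log N+4\log2}{\log N+2\log2}\approx1.189$ and $1.185$. Equivalently, the crude bound gives about $709$ against a right side of about $685$ at $N=385$. Your plan sends all of $m\ge3$ to the uniform estimate, so it does not prove the lemma for these $N$. The same overstatement appears for $\kappa=2$: one prime $\ge5$ does not ``absorb'' the factors at $2$ and $3$, since $F(30)\approx2.10$ and $F(210)\approx2.02$ both exceed $2$. There the log ratio happens to save you. Your refined top-term split does handle $385$ and $455$ (at $N=385$ it gives about $585$ against $685$), so the defect is repairable, but the split has to be applied there.

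\textbf{Gap in the monotonicity step.} You say that ``checking the smallest admissible primes in each shape suffices.'' The proper-divisor ratio does decrease in each prime. But the factors $\log(4N)$ and $\log(4N/q)$ increase, and the target ratio $\frac{\log N+4\log2}{\log N+2\log2}$ decreases, so the full inequality is not monotone as stated. A correct repair:
\begin{enumerate}
\item $F(N)\le\kappa(N)$ already suffices, since $\log(4N)<\log N+4\log2$.
\item $F(N)=2^{1-m}+\bigl(\prod_{p\mid N}(\sqrt p+2)-\sqrt N-2^m\bigr)/(2^{m-1}\sqrt N)$ is decreasing in each prime.
\item Hence only the finite set $\{F>\kappa\}$ needs direct checking.
\end{enumerate}
That set is not a handful: for $\kappa=1$ it contains every $5q$ with $7\le q\le 353$. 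So your route yields a correct but computer-assisted proof.

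\textbf{Comparison with the paper.} The paper needs no cases. For $d$ with $\omega(N/d)=j$ it shows $2^j\sqrt d\le\kappa(N)\sqrt N$, because $N/d\ge5^j$ when $N$ is coprime to $6$ and $N/d\ge4^{j-1}$ in general. It then evaluates $\sum_{d\in\mathcal D_j}\log(4d)=\binom{m-1}{j}\log N+\binom{m}{j}\log4$ exactly. Summing over $j$ gives the bound, with the $4\log2$ coming from $\sum_j\binom{m}{j}\log4\le2^m\log4$. The point is to absorb the weight $2^{\omega(N/d)}$ termwise into the missing factor $\sqrt{N/d}$, rather than globally through the Euler product. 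That is what makes the estimate uniform.
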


\begin{proof}
Fix $j$ with $0\leq j\leq m-1$, and sum over divisors $d\mid N$ with $d>1$ and $\omega(N/d)=j$.  Write $e=N/d$.  Then $e$ is squarefree and has $j$ prime factors.

We first prove
\begin{equation} \label{eqn:small-prime-factor-bound}
  2^j\sqrt d=\sqrt{4^j d}\leq \kappa(N)\sqrt N.
\end{equation}
If $N\equiv \pm1\pmod 6$, then every prime factor of $e$ is at least $5$, so $e \geq 5^j\geq 4^j$, and \eqref{eqn:small-prime-factor-bound} holds with $\kappa(N)=1$.  Otherwise $\kappa(N)=2$.  The smallest possible product of $j$ distinct primes is at least $4^{j-1}$.  Hence $e\geq 4^{j-1}$, so $4^j d=4^jN/e\leq 4N$, proving \eqref{eqn:small-prime-factor-bound}.

Let
\[
  \mathcal D_j\colonequals \{d\mid N: d>1,\ \omega(N/d)=j\}.
\]
By \eqref{eqn:small-prime-factor-bound},
\begin{equation} \label{eqn:j-sum-start}
\sum_{d\in\mathcal D_j}\sqrt d\log(4d)2^{\omega(N/d)}
\leq
\kappa(N)\sqrt N\sum_{d\in\mathcal D_j}\log(4d).
\end{equation}
The set $\mathcal D_j$ has $\binom{m}{j}$ elements.  Moreover, each prime factor of $N$ occurs in exactly $\binom{m-1}{j}$ of the divisors $d\in\mathcal D_j$, since it occurs in $d=N/e$ precisely when it is not chosen as one of the $j$ prime factors of $e$.  Therefore
\[
  \prod_{d\in\mathcal D_j} 4d
  =4^{\binom{m}{j}}N^{\binom{m-1}{j}},
\]
and hence
\begin{equation} \label{eqn:log-sum-Dj}
  \sum_{d\in\mathcal D_j}\log(4d)
  =\binom{m-1}{j}\log N+\binom{m}{j}\log 4.
\end{equation}
Combining \eqref{eqn:j-sum-start} and \eqref{eqn:log-sum-Dj}, then summing over $0\leq j\leq m-1$, gives
\begin{equation}
\begin{aligned}
\sum_{\substack{d>1\\d\mid N}}
\sqrt d\log(4d)2^{\omega(N/d)}
&\leq \kappa(N)\sqrt N
\left(
  \sum_{j=0}^{m-1}\binom{m-1}{j}\log N
  +\sum_{j=0}^{m-1}\binom{m}{j}\log 4
\right)\\
&\leq \kappa(N)\sqrt N
\left(2^{m-1}\log N+2^m\log 4\right)\\
&=\kappa(N)2^{m-1}\sqrt N\left(\log N+4\log 2\right),
\end{aligned}
\end{equation}
which is \eqref{eqn:divisor-sum-bound}.
\end{proof}

We then obtain the following positivity criterion. 

\begin{cor} \label{cor:positivity-criterion}
Let $m=\omega(N)$.  If
\begin{equation} \label{eqn:positivity-criterion}
  \frac{\varphi(N)}{12}
  >
  A(N)+\frac{\kappa(N)2^m}{\pi}\sqrt N\left(\log N+4\log2\right),
\end{equation}
then $\dim S_2^{\new,\varepsilon_N}(N)>0$ for every sign pattern $\varepsilon_N$.
\end{cor}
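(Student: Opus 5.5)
The corollary follows by chaining Lemma~\ref{lem:coarse-lower-bound} with Lemma~\ref{lem:divisor-sum-bound}. Fix a sign pattern $\varepsilon_N$ and put $m=\omega(N)$. Lemma~\ref{lem:coarse-lower-bound} bounds $\dim S_2^{\new,\varepsilon_N}(N)$ below by $2^{-m}$ times
\[
\frac{\varphi(N)}{12}-A(N)-\frac{2}{\pi}\sum_{\substack{d>1\\ d\mid N}}\sqrt d\log(4d)2^{\omega(N/d)}.
\]
This bound holds uniformly in $\varepsilon_N$. We then insert the divisor-sum estimate \eqref{eqn:divisor-sum-bound} into the error term. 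Multiplying that estimate by $2/\pi$ gives
\[
\frac{2}{\pi}\sum_{\substack{d>1\\ d\mid N}}\sqrt d\log(4d)2^{\omega(N/d)}\leq \frac{2}{\pi}\kappa(N)2^{m-1}\sqrt N(\log N+4\log 2)=\frac{\kappa(N)2^m}{\pi}\sqrt N(\log N+4\log 2).
\]
This is exactly the last term on the right side of \eqref{eqn:positivity-criterion}.

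Substituting, we get
\[
2^m\dim S_2^{\new,\varepsilon_N}(N)\geq \frac{\varphi(N)}{12}-A(N)-\frac{\kappa(N)2^m}{\pi}\sqrt N(\log N+4\log2).
\]
Under hypothesis \eqref{eqn:positivity-criterion} the right side is strictly positive, so $\dim S_2^{\new,\varepsilon_N}(N)>0$. Because the dimension is an integer, this means it is at least $1$, so the space is nonzero. Since $\varepsilon_N$ was arbitrary, the conclusion holds for every sign pattern.

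There is no real obstacle here; the only care needed is bookkeeping. The factor $\frac{2}{\pi}\cdot 2^{m-1}=\frac{2^m}{\pi}$ must match the stated constant, and the direction of each inequality must be tracked: the error term is subtracted, so an upper bound on it yields a lower bound on the dimension. One should also confirm that the case split defining $\kappa(N)$ in Lemma~\ref{lem:divisor-sum-bound} applies to every squarefree $N$, so that the two lemmas can be combined with no extra hypotheses.
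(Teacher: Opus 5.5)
Your proposal is correct and is exactly the paper's argument: the paper's proof simply combines Lemma~\ref{lem:coarse-lower-bound} with Lemma~\ref{lem:divisor-sum-bound}. Your check that $\frac{2}{\pi}\cdot\kappa(N)2^{m-1}=\frac{\kappa(N)2^m}{\pi}$, so that the bounds slot together and positivity follows uniformly in $\varepsilon_N$, is the only bookkeeping involved.
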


\begin{proof}
Combine Lemmas~\ref{lem:coarse-lower-bound} and~\ref{lem:divisor-sum-bound}.
\end{proof}

\subsection*{Applying the positivity criterion}

The upshot of \eqref{eqn:positivity-criterion} is that $A(N)$ and $\kappa(N)$ depend only on the number of prime divisors of $N$ and the congruence class $N$ modulo $6$; we exploit this in the following statement.

\begin{prop} \label{prop:omega-bound}
The following statements hold.
\begin{enumroman}
    \item If $N \equiv \pm 1 \pmod{6}$, then $\dim S_2^{\new, \varepsilon_N}(N)>0$ if $\omega(N) \geq 9$.
    \item If $N \equiv 3 \pmod{6}$, then $\dim S_2^{\new, \varepsilon_N}(N)>0$ if $\omega(N) \geq 11$.
    \item If $N \equiv 0 \pmod{2}$, then $\dim S_2^{\new, \varepsilon_N}(N)>0$ if $\omega(N) \geq 13$.
\end{enumroman}
\end{prop}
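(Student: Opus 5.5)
We apply Corollary~\ref{cor:positivity-criterion} separately in each residue class. Write $m=\omega(N)$ and $N=p_1\cdots p_m$ with $p_1<\dots<p_m$. Since $A(N)$ and $\kappa(N)$ depend only on $m$ and the class of $N$ modulo $6$, it suffices to show the following. Among all squarefree $N$ in the given class with $\omega(N)=m$ at or above the stated threshold, the ratio of the left side $\varphi(N)/12$ to the right side of \eqref{eqn:positivity-criterion} is always greater than $1$.

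First, we reduce to a one-parameter family. Write $N=M\cdot q_1\cdots q_k$, where $M$ is the product of the $m-k$ smallest admissible primes. Replacing a prime factor $p$ by a larger prime $p'$ multiplies $\varphi(N)$ by $(p'-1)/(p-1)\ge p'/p$. It multiplies $\sqrt N(\log N+4\log 2)$ by at most $\sqrt{p'/p}\cdot(1+\log(p'/p)/\log N)$, which is at most $p'/p$ once $\log N\ge 1$. So the function
\[
F(N)\colonequals\frac{\varphi(N)}{12}-A(N)-\frac{\kappa(N)2^m}{\pi}\sqrt N(\log N+4\log 2)
\]
should be monotone under enlarging primes, provided it is already positive. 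More precisely, $\varphi(N)/12$ grows essentially linearly in each prime, while the error grows like its square root times a logarithm.

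For each fixed $m$, the minimal configuration is therefore the product $N_m$ of the first $m$ admissible primes:
\begin{itemize}
\item for $N\equiv\pm1\pmod 6$, the primes $5,7,11,\dots$;
\item for $N\equiv 3\pmod 6$, the prime $3$ times the next $m-1$ primes $\ge 5$;
\item for $N$ even, $2$ times the next $m-1$ odd primes.
\end{itemize}
In the even case we need not track whether $3\mid N$, since $A$ and $\kappa$ do not depend on it. We then check numerically that $F(N_m)>0$ at the thresholds $m=9,11,13$ respectively. As a sanity check for $m=9$: $N_9=5\cdot7\cdots 37\approx 1.1\cdot10^{12}$ gives $\varphi/12\approx 6\cdot10^{10}$, against an error term of about $163\cdot10^6\cdot 34\approx 5.5\cdot10^{9}$, so the inequality holds.

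It remains to cover larger $m$, and we argue by induction on $m$. Passing from $m$ to $m+1$ multiplies $N$ by a new prime $p$ that is much larger than $4$. This multiplies $\varphi(N)/12$ by $p-1$. It multiplies the error term by about $2\sqrt p\,(1+\log p/\log N)$ and the term $A$ by $2$. Since $p-1>2\sqrt p\cdot 2$ for $p\ge 23$, positivity propagates from $m$ to $m+1$.

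The main obstacle is making the monotonicity step rigorous. Positivity of $F$ must survive both replacing primes by larger ones and appending primes, despite the $\log N$ factor and the fact that $\varphi(p)=p-1$ rather than $p$. To handle this, I would compare the ratio of the left side to the right side of \eqref{eqn:positivity-criterion} rather than the difference $F$ itself, and verify that this ratio is nondecreasing under each elementary move, using $(p'-1)/(p-1)\ge\sqrt{p'/p}\,(1+\log(p'/p)/\log N)$ for $p'>p\ge 2$ and $N$ in the relevant range. The base cases are then finite numerical checks in Magma.
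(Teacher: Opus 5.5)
Your argument is correct and has the same skeleton as the paper's proof. It uses the same numerical base cases at the same minimal levels $P_9=33426748355$, $P_{11}=3710369067405$, $P_{13}=304250263527210$. It then runs an induction on $m$ in which $q-1$ beats $2\sqrt q$ times a logarithmic factor.

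The difference is in how you pass from the minimal level to every level with the same $\omega(N)$.
\begin{enumerate}
\item The paper relaxes to one real variable via $\varphi(N)\ge\rho_m N$, and checks both $F_m(P_m)>0$ and $F_m'(P_m)>0$. Its induction strips the largest prime $q$ from an \emph{arbitrary} $N^*$, so $q$ may be huge compared with $N^*/q$. It handles this using $\log N+4\log2\ge4\log2$ and $q-1\ge2\sqrt q\,(1+\log q/(4\log2))$ for $q\ge37$.
\item Your exchange argument works as follows: replacing $p$ by $p'=rp$ multiplies $\varphi(N)/12$ by at least $r$ and $A+E(N)$ by at most $r$, so the ratio of the two sides of \eqref{eqn:positivity-criterion} cannot drop. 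This needs no derivative check. Because you only ever append the next admissible prime to $N_m$, the crude bound $\log p\le\log N$ suffices. You should state explicitly that the induction runs only along $N_m\to N_{m+1}$, since for arbitrary $N$ the factor $1+\log p/\log N$ is unbounded.
\end{enumerate}
What the paper's continuous version buys is reuse. The same $F_m(x)$ is solved for smaller $m$ to produce the cutoffs $B$ of Table~\ref{Lower_Bounds_N}. In those rows $F(N_m)<0$, and monotonicity from the minimum gives no cutoff by itself.

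Two small corrections.
\begin{enumerate}
\item The inequality $\sqrt r\,(1+\log r/\log N)\le r$ for all $r\ge1$ needs $\log N\ge2$, not $\log N\ge1$; it fails at $r=2$, $\log N=1$. Keeping the $4\log2$ in the denominator makes it hold for every $N$.
\item Your sanity check uses the wrong $N_9$: $5\cdot7\cdots37$ has ten prime factors. The correct value is $N_9=5\cdot7\cdots31\approx3.3\cdot10^{10}$. There $\varphi(N_9)/12\approx1.28\cdot10^9$ against an error term of about $8.0\cdot10^8$. The margin is only a factor of about $1.6$, though still positive.
\end{enumerate}
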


\begin{proof}
We prove that \eqref{eqn:positivity-criterion} holds in the indicated ranges.

We choose one of the three congruence classes.  We let $m \geq 1$ and consider integers $N$ in the given congruence class with $\omega(N)=m$.  Let $P_m$ be the product of the $m$ smallest possible prime factors of $N$ in that class: so for (i) we take the product of the first $m$ primes $p \geq 5$; for (ii) we take $3$ times the product of the first $m-1$ primes at least $5$; and for (iii) we just take the product of the first $m$ primes.  Let
\begin{equation}
  \rho_m \colonequals \prod_{p\mid P_m}\left(1-\frac1p\right).
\end{equation}
For any squarefree $N$ in the congruence class with $\omega(N)=m$, we have $N \geq P_m$ by construction and 
\begin{equation} \label{eqn:phi-min-bound}
  \varphi(N)=N\prod_{p\mid N}\left(1-\frac1p\right) \geq \rho_m N.
\end{equation}

For $x \geq 1$ define
\begin{equation} \label{eqn:fmx}
  F_{m}(x)
  \colonequals
  \frac{\rho_m}{12}x
  -A(m)
  -\frac{\kappa 2^m}{\pi}\sqrt x\left(\log x+4\log2\right),
\end{equation}
where $A(m)$ and $\kappa$ are determined by $m$ and the congruence class.  By \eqref{eqn:phi-min-bound}, the positivity criterion follows from $F_{m}(N)>0$.  We have
\begin{equation} \label{eqn:deriv}
  F'_{m}(x)
  =\frac{\rho_m}{12}
  -\frac{\kappa 2^m}{\pi}\frac{\log x+4\log2+2}{2\sqrt x}.
\end{equation}
The function $(\log x+4\log2+2)/(2\sqrt x)$ is decreasing for $x\geq 1$, thus to check $F_m(x)>0$ for $x \geq x_0$ it suffices to show that $F_m'(x_0) > 0$.  

We first check the base values $m=9,11,13$ for the three classes.  At the minimal possible $x=P_{\mathcal C,m}$, direct numerical evaluation gives
\[
\begin{array}{c|c||c|c|c}
\textup{congruence} & m & P_m & F_{m}(P_m) & F'_{m}(N) \\ \hline\hline
N\equiv \pm1\psmod6 & 9 & 33426748355 & >4.7\cdot 10^8 & >0.025,\\
N\equiv 3\psmod6 & 11 & 3710369067405 & >1.2\cdot 10^{10} & >0.013,\\
N\equiv 0\psmod2 & 13 & 304250263527210 & >3.9\cdot 10^{11} & >0.006.
\end{array}
\]
Thus $F_{m}(x)>0$ for all $x\geq P_m$ in these base cases.

It remains to propagate the result to larger $m$ which we do by induction on $m$.  Suppose \eqref{eqn:positivity-criterion} holds for every $N$ in one of the three classes with $\omega(N)=m$, where $m$ is at least the corresponding base value.  Let $N^*$ be in the same class with $\omega(N^*)=m+1$, and let $q$ be the largest prime factor of $N^*$.  Put $N=N^*/q$.  Then $N$ is still in the same class, and $q\geq 37$ in all three inductions.  By the induction hypothesis,
\[
  \frac{\varphi(N)}{12}
  >A(m)+\frac{\kappa 2^m}{\pi}\sqrt N(\log N+4\log2).
\]
Multiplying both sides by $q-1$ to get
\[ \frac{\varphi(N^*)}{12} > (q-1)A(m)+\frac{\kappa 2^m (q-1)}{\pi}\sqrt N(\log N+4\log2). \]
Of course $(q-1)A(m) \geq 2A(m) > A(m+1)$; and we claim that for $q\geq 37$,
\begin{equation} \label{eqn:q-growth}
  q-1\geq 2\sqrt q\left(1+\frac{\log q}{4\log2}\right)
\end{equation}
since the difference is $>7$ at $q=37$ and the derivative is positive for $q \geq 37$.  Since $\log N+4\log2\geq 4\log2$, \eqref{eqn:q-growth} implies
\[
  (q-1)(\log N+4\log2)
  \geq 2\sqrt q(\log(qN)+4\log2).
\]
Therefore
\begin{equation}
  (q-1)\frac{\kappa 2^m}{\pi}\sqrt N(\log N+4\log2)
  \geq
  \frac{\kappa 2^{m+1}}{\pi}\sqrt {N^*}(\log N^*+4\log2).
\end{equation}
Combining these two inequalities proves \eqref{eqn:positivity-criterion} for $N^*$.  The proof is then finished by induction.
\end{proof}

\subsection*{Computation}

After Proposition~\ref{prop:omega-bound}, we are left with finitely many possible values of $\omega(N)$ in each congruence class.  

\begin{lem}
For $N$ in the given congruence class with given number of prime divisors $\omega(N)$, if $N>B$ in Table~\textup{\ref{Lower_Bounds_N}}, then $\dim S_2^{\new,\varepsilon_N}(N)>0$.
\end{lem}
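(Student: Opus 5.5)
The argument is a refinement of the proof of Proposition~\ref{prop:omega-bound}: we keep the function $F_m$ of \eqref{eqn:fmx} but, for each fixed $m$ below the thresholds there, look for the point past which it stays positive. Fix a congruence class (one of $N\equiv\pm1$, $N\equiv 3 \pmod 6$, or $N$ even) and a value $m=\omega(N)$ below the corresponding threshold $9$, $11$, or $13$. For every such $N$ we have $\varphi(N)\ge \rho_m N$ by \eqref{eqn:phi-min-bound}. Hence Corollary~\ref{cor:positivity-criterion} applies as soon as $F_m(N)>0$, with $A(m)$ and $\kappa$ depending only on the class and on $m$. So for each pair (class, $m$) it suffices to produce a real number $B=B(\text{class},m)$ such that $F_m(x)>0$ for all $x> B$. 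These values of $B$ are the entries of Table~\ref{Lower_Bounds_N}.

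To find $B$, we study $F_m(x)=\frac{\rho_m}{12}x-A(m)-\frac{\kappa2^m}{\pi}\sqrt x(\log x+4\log 2)$. It is a linear term minus a constant minus a term of size $\sqrt x\log x$, so it is negative for small $x$ and tends to $+\infty$. From \eqref{eqn:deriv}, $F_m'(x)$ equals $\rho_m/12$ minus a quantity decreasing in $x$. Therefore $F_m'$ is increasing, so $F_m$ is convex, and it has exactly one zero $x_0$ beyond which it stays positive. We then take $B$ to be a rounded-up value of $x_0$ (or any convenient number above it). To certify a given $B$, it suffices to check two numerical inequalities: $F_m(B)>0$ and $F_m'(B)>0$. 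By the monotonicity of $F_m'$ already noted in the proof of Proposition~\ref{prop:omega-bound}, these imply $F_m(x)>0$ for all $x\ge B$. This is the same certification used for the base cases there, now carried out at a smaller point.

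The work is to carry out this computation for each (class, $m$), with $m$ running from $1$ up to $8$, $10$, or $12$ respectively. In each case we compute $\rho_m$ from $P_m$, locate the root $x_0$ numerically (for example by bisection, starting from the point where the derivative becomes positive), and verify the two inequalities at the tabulated $B$ with rigorous interval arithmetic. The main obstacle is purely this numerical verification. For small $m$ there is a subtlety: a tabulated $B$ may lie below $P_m$, or $F_m$ may even be positive on the whole range $x\ge P_m$. In those cases the statement holds vacuously or trivially, and we only need to check the inequalities on $[\max(B,P_m),\infty)$. We also need to make sure the $N\equiv 3\pmod 6$ class with $m=1$, that is $N=3$, is handled consistently, where $P_1=3$ and $\rho_1=2/3$.
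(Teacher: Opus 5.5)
Your proposal is correct and matches the paper's argument. For each congruence class and $m$, you solve $F_m(x)=0$ numerically, round up to obtain $B$, rigorously verify $F_m(B)>0$, and then use the monotonicity of $F_m'$ from the proof of Proposition~\ref{prop:omega-bound} to get $F_m(x)>0$ for all $x\ge B$; two small remarks are that your convexity observation makes the separate check $F_m'(B)>0$ redundant given $F_m(1)<0<F_m(B)$, and that the $m=1$ discussion is unnecessary since Table~\ref{Lower_Bounds_N} starts at $\omega(N)=2$.
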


\begin{table}[ht]
\begin{tabular}{c||c|c|c}
$\omega(N)$ & $N \equiv \pm 1 \pmod{6}$ & $N \equiv 3 \pmod{6}$ & $N \equiv 0 \pmod{2}$ \\ \hline \hline
13 & - & - & - \\ \hline
12 & - & - & 52320255687514 \\ \hline
11 & - & - & 11302808177135 \\ \hline
10 & - & 584712825763 & 2402574532579 \\ \hline
9 & - & 122979855890 & 505520992240 \\ \hline
8 & 2544226303 & 25568947411 & 103627324753 \\ \hline
7 & 518227848 & 5170085802 & 20662946692 \\ \hline
6 & 102294401 & 1014636104 & 4029222669 \\ \hline
5 & 19508619 & 194205148 & 745913629 \\ \hline
4 & 3607458 & 35153388 & 131967432 \\ \hline
3 & 625485 & 6050500 & 20186499 \\ \hline
2 & 101953 & 892300 & 2588857 
\end{tabular}
\caption{Bounds $B$ such that $N>B$ implies $\dim S_2^{\new,\varepsilon_N}(N)>0$ for every sign pattern.}\label{Lower_Bounds_N}
\end{table}

A dash in Table~\ref{Lower_Bounds_N} means that Proposition~\ref{prop:omega-bound} already gives positivity for all such $N$.

\begin{proof}
The same positivity criterion gives explicit cutoffs for $N$ for fixed $m=\omega(N)$ and congruence class.  We compute the associated upper bounds on $N$ in the following table, where an entry $B$ means that every squarefree $N$ in that congruence class with the indicated value of $\omega(N)$ and with $N>B$ has $\dim S_2^{\new,\varepsilon_N}(N)>0$ for every sign pattern $\varepsilon_N$.  
The entries in Table~\ref{Lower_Bounds_N} are obtained by rounding up after solving $F_m(x) = 0$ where $F_m(x)$ is as in \eqref{eqn:fmx}, for fixed $m$ in a given congruence class.  A numerical solution can be quickly estimated, then $F_m(B)>0$ verified rigorously.  Computing the derivative and repeating the argument in the proof of Proposition~\ref{prop:omega-bound} then also shows that $F_m(N)>0$ for all $N\geq B$.  
\end{proof}

It remains to enumerate the finitely many levels below these cutoffs and then evaluate Martin's exact dimension formula. We recall from \cite[Proposition 1.4, 3.2]{Martin18} that
\begin{align*}
    \dim S_2^{new,\epsilon_N}(N) &= 2^{-\omega(N)} \sum_{d \mid N} \epsilon_N(d) \cdot \mathrm{tr}_{S_2^{new}(N)} W_d, \text{ where} \\
    \mathrm{tr}_{S_2^{new}(N)} W_d &:= -\frac{1}{2} h'(\Delta_d) b(d,N/d) \prod_{\substack{p \mid (N/d) \\ p \neq 2}} \left( \left(\frac{\Delta_d}{p} \right) - 1 \right) + (-1)^{\omega(N) - \omega(d)} \\
    & \hspace{30pt} - \delta[d = 2] \cdot \prod_{p \mid (N/d)} \left( \left( \frac{-4}{p} \right) - 1 \right) - \delta[d = 3] \cdot \prod_{p \mid (N/d)} \left( \left( \frac{-3}{p} \right) - 1 \right).
\end{align*}

The computation proceeds with the following steps.
\begin{enumalg}
\item Fix a row and column of Table~\ref{Lower_Bounds_N}.  Thus $m=\omega(N)$, the congruence class of $N$, and a bound $B$ are fixed.  If the table entry is a dash, there is nothing to enumerate.

\item Generate all increasing $m$-tuples of primes $(p_1,\ldots,p_m)$ in the required congruence class with $\prod_i p_i\leq B$.  During the recursion, after $p_1,\ldots,p_i$ have been chosen, it is enough to consider
\begin{equation} \label{eqn:tuple-recursion-bound}
  p_i<p_{i+1}<\left(\frac{B}{\prod_{k=1}^i p_k}\right)^{1/(m-i)},
\end{equation}
otherwise even choosing all remaining primes at least $p_{i+1}$ would make the product exceed $B$.

\item Apply the filter from the starting inequality: if $\dim S_2^{\new,\varepsilon_N}(N)=0$ for some sign pattern, then by \eqref{eqn:coarse-lower-bound} we must have
\begin{equation} \label{eqn:coarse-necessary-condition}
  \frac{\varphi(N)}{12}
  \leq
  A(N)+\frac{2}{\pi}\sum_{\substack{d>1\\d\mid N}}
  \sqrt d\log(4d)2^{m-\omega(d)}.
\end{equation}
We discard $N$ if \eqref{eqn:coarse-necessary-condition} fails.

\item Apply a sign-pattern filter.  For a fixed sign pattern $\varepsilon_N$, Martin's trace formula identifies exactly which divisors $d$ can contribute negatively.  Let $S(\varepsilon_N)$ be the set of divisors $d\mid N$ satisfying one of the following conditions:
\begin{enumalgii}
    \item $N$ is odd, $d>3$,
    \[
      \varepsilon_N(d)b(d,N/d)(-1)^{\omega(N/d)}=1,
    \]
    and $(-d\,|\,p)=-1$ for all $p\mid N/d$;

    \item $N$ is even, $d\equiv 1,3\pmod 4$, $d>3$,
    \[
      \varepsilon_N(d)b(d,N/d)(-1)^{\omega(N/d)}=-1,
    \]
    and $(-d\,|\,p)=-1$ for all primes $p\mid N/d$ with $p\neq 2$;

    \item $N$ is even, $d$ is even, $d>3$,
    \[
      \varepsilon_N(d)b(d,N/d)(-1)^{\omega(N/d)}=-1,
    \]
    and $(-d\,|\,p)=-1$ for all $p\mid N/d$.
\end{enumalgii}
Then a lacking pair satisfies the sharper necessary inequality
\begin{equation} \label{eqn:sign-filter-necessary-condition}
  \frac{\varphi(N)}{12}
  \leq 
  A(N)+\frac{2}{\pi}\sum_{d\in S(\varepsilon_N)}
  \sqrt d\log(4d)2^{\omega(N/d)}.
\end{equation}
This step discards sign patterns for which the negative trace terms are too small to cancel the main term.

\item For the remaining pairs $(N,\varepsilon_N)$, compute $\dim S_2^{\new,\varepsilon_N}(N)$ exactly using a Magma implementation \cite{MR1484478} of the Sage code \cite{sagemath} supplied by Martin \cite{Martin18}.
\end{enumalg}

Steps 2--4 are not logically necessary, but they really help---so much so that we can call the LMFDB \cite{lmfdb} for the class numbers we need by the time we get to the final step.

The resulting zero-dimensional eigenspaces are exactly those listed in Tables~\ref{Complete_N_5}, \textup{\ref{Complete_N_3}}, \ref{Complete_N_2}, and~\ref{Complete_N_2.1}.  The Magma enumeration and filtering in Steps 2--5 took 278234.59 seconds on a single 4 x Intel Xeon Gold 6148 CPU with 64 threads, with total memory usage 33901.19MB.

As an immediate corollary, we obtain the squarefree composite levels for which no weight $2$ newform has root number $-1$.

\begin{corollary} \label{cor:root-number-minus-list}
Let $S_2^{\new,-}(N)$ denote the subspace generated by newforms of weight $2$ and squarefree level $N$ with root number $-1$.  Then there are exactly $45$ squarefree composite integers $N$ such that $\dim S_2^{\new,-}(N)=0$, namely
\begin{align*}
&6,10,14,15,21,22,26,30,33,34,35,38,39,42,46,51,55,62,66,69,70,74,78,86,87,94,95,\\
&105,110,111,114,134,146,159,174,182,186,194,195,206,222,230,231,255,266.
\end{align*}
\end{corollary}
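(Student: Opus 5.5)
The plan is to read Corollary~\ref{cor:root-number-minus-list} off Theorem~\ref{theorem:newforms}. The first step is to relate the root number to Atkin--Lehner signs. For a newform $f$ of weight $2$ and squarefree level $N$, the sign of the functional equation is $-\varepsilon_N(w_N)$, where $w_N$ is the Fricke involution and $\varepsilon_N(w_N)=\prod_{p\mid N}\varepsilon_N(w_p)$ is the Fricke eigenvalue. So the root number is $-1$ exactly when the Fricke eigenvalue is $+1$. This gives the decomposition
\[
S_2^{\new,-}(N)=\bigoplus_{\varepsilon_N(w_N)=+1} S_2^{\new,\varepsilon_N}(N),
\]
with the sum running over the $2^{\omega(N)-1}$ sign patterns whose values on the $w_p$ multiply to $+1$. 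This holds because $S_2^{\new}(N)$ has a basis of newforms, each of which is a simultaneous Atkin--Lehner eigenform by multiplicity one. Consequently $\dim S_2^{\new,-}(N)=0$ if and only if every such pattern is lacking.

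The second step is to use Theorem~\ref{theorem:newforms} to turn this into a finite check over the Tables \ref{Complete_N_5}, \ref{Complete_N_3}, \ref{Complete_N_2}, \ref{Complete_N_2.1}. For each composite squarefree $N$ appearing there, I will collect the lacking sign patterns and test whether they include all $2^{\omega(N)-1}$ patterns with product $+1$. Any $N$ not in the tables has no lacking patterns at all, so it has positive $\dim S_2^{\new,-}(N)$ and cannot be on the list. Tabulating the $N$ that pass the test should produce exactly the $45$ listed values.

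As an independent cross-check, at least for small $N$, I will also compute $\dim S_2^{\new,-}(N)$ directly. It equals $\tfrac12\bigl(\dim S_2^{\new}(N)+\operatorname{tr}W_N\bigr)$, using Lemma~\ref{lem:s2nnew} for the first term and Martin's formula for $\operatorname{tr}_{S_2^{\new}(N)}W_N$ for the second. A much shorter argument is also available. Only the single sum over $d\in\{1,N\}$ matters here, and the trace term $\tfrac12 h'(\Delta_N)|b(N,1)|$ is $O(\sqrt N\log N)$. This bounds $N$ far more cheaply than the full Theorem and independently confirms that the list is finite and correct.

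I expect the main obstacle to be the sign convention in the first step. I must make sure the root number is $-w_N$ and not $+w_N$ under the paper's normalization of $w_d$, and I will check this on a known example. For $N=37$, the rank-one elliptic curve 37a has $w_{37}=+1$ and root number $-1$, and $37$ is prime, hence correctly absent from a list of composite levels. For $N=6$, we have $S_2(6)=0$, so $6$ is trivially on the list. Once the convention is fixed, the rest is bookkeeping over the tables.
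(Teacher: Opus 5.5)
\paragraph{Your reduction is the paper's, but the final read-off does not give the stated list.}
You set things up correctly. By multiplicity one, $S_2^{\new,-}(N)$ is the sum of the $2^{\omega(N)-1}$ eigenspaces $S_2^{\new,\varepsilon_N}(N)$ with $\prod_{p\mid N}\varepsilon_N(p)=+1$. Your sign convention (root number $=-\varepsilon_N(w_N)$) is also the right one. The gap is the last step, which you only assert (``should produce exactly the $45$ listed values''), and which in fact fails.

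The level $N=119=7\cdot 17$ appears in both the $[1,1]$ row and the $[-1,-1]$ row of Table~\ref{Complete_N_5}. For $\omega(N)=2$ these are the only two patterns with product $+1$, so the read-off yields $46$ levels, not $45$. Every one of the $45$ displayed levels does pass your test; $119$ is the only discrepancy.

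\paragraph{Your own cross-check confirms this without the tables.}
By Lemma~\ref{lem:s2nnew}, $\dim S_2^{\new}(119)=\varphi(119)/12+\mu(119)=8+1=9$. Here $e_3=e_4=0$, because $\left(\frac{-3}{7}\right)=1$ and $\left(\frac{-4}{17}\right)=1$.

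Martin's formula gives $\operatorname{tr}W_{119}=-\tfrac12\, h(-119)\, b(119,1)+1=-\tfrac12\cdot 10\cdot 2+1=-9$. Hence $\dim S_2^{\new,-}(119)=\tfrac12(9-9)=0$, so every newform of level $119$ has $w_{119}=-1$.

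Geometrically, $w_{119}$ has $h(-476)+h(-119)=20$ fixed points on $X_0(119)$, which has genus $11$. So $X_0(119)/w_{119}$ has genus $1$. That single dimension is accounted for by the two-dimensional old space from level $17$, on which $W_{119}=W_7W_{17}$ has trace $0$.

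\paragraph{Consequence.}
The statement as written (exactly $45$ levels) is false, so neither your argument nor the paper's can prove it. The paper's proof makes the same unchecked ``reading off'' claim. Your method is sound and proves the corrected statement with $46$ levels, $119$ included; the same goes for the cheaper Fricke-trace bound you sketch, which uses only the terms $d\in\{1,N\}$.
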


\begin{proof}
For a weight $k$ newform of squarefree level $N$ and Atkin--Lehner sign pattern $\varepsilon_N$, the root number is
\[
  (-1)^{k/2}\prod_{p\mid N}\varepsilon_N(p).
\]
For $k=2$, the root number is $-1$ exactly when $\prod_{p\mid N}\varepsilon_N(p)=1$, equivalently $\varepsilon_N(p)=-1$ for an even number of prime divisors $p\mid N$.  Therefore $S_2^{\new,-}(N)=0$ exactly when all sign eigenspaces in Tables~\ref{Complete_N_5}, \ref{Complete_N_3}, \ref{Complete_N_2}, and~\ref{Complete_N_2.1} with this parity condition are zero.  Reading off those levels gives the displayed list.
\end{proof}

\section{Gained primes of good reduction}
\label{sec:gained-primes}

As in the introduction, let $D,N$ be coprime squarefree integers with $D$ the product of an even number of primes (with $D=1$ allowed).  Let $X_0(D;N)$ be the modular or Shimura curve associated to an Eichler order of level $N$ in a quaternion algebra of discriminant $D$, and let $X_0^*(D;N) \colonequals X_0(D;N)/W_{DN}$ be the full Atkin--Lehner quotient of the associated modular or Shimura curve.  We say that a prime $p \mid DN$ is an \defi{unexpected} or \defi{gained} prime of good reduction if the curve $X_0^*(D;N)$ or its Jacobian $J_0^*(D;N) \colonequals \Jac(X_0^*(D;N))$ has good reduction at $p$.  

In the case when the genus of $X_0^*(D;N)$ is $g=0$, the Jacobian is trivial and automatically all primes $p \mid DN$ are gained primes of good reduction.  Whenever $X_0^*(D;N)(\Q) \simeq \PP^1$---for example whenever $D=1$ and the cusp $\infty$ is $\Q$-rational, or more generally when there is a rational CM point---then again all primes are gained. This is always the case, as proven by \cite[Proposition 4.2]{NR15}.

\subsection*{Semistable quotients and a one-vertex criterion}

Let $R$ be a discrete valuation ring with fraction field $K$ and residue field $k$.  Let $X$ be a smooth, proper, geometrically connected curve over $K$ with semistable reduction, and let $\mathcal X$ be a proper semistable model over $R$.  We write $\Gamma_{\mathcal X}$ for the dual graph of the geometric special fibre $\mathcal X_{\bar k}$: its vertices are the irreducible components, its edges are the nodes, and a self-intersection on a component gives a loop.

\begin{lemma} \label{lem:raynaud-dual-graph}
Let $J \colonequals \Jac(X)$, and let $\mathcal J$ be its N\'eron model over $R$.  Then the identity component of the geometric special fibre fits in an extension
\[
  0 \to T \to \mathcal J^0_{\bar k} \to A \to 0,
\]
where $T$ is a torus and $A$ is an abelian variety, and there is a canonical isomorphism of the character group of $T$ with $H_1(\Gamma_{\mathcal X},\Z)$.

In particular, $J$ has good reduction if and only if $\Gamma_{\mathcal X}$ is a tree, equivalently if and only if the special fibre is of compact type.
\end{lemma}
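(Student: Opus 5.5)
This is Raynaud's theorem on the Picard functor of a semistable curve; the proof I would give mostly assembles standard inputs. Set $\mathcal X$ semistable, hence regular away from the nodes. First I would reduce to the case of a regular model: resolving each node of thickness $n$ by a chain of $n-1$ projective lines replaces $\Gamma_{\mathcal X}$ by a subdivision. A subdivision is homotopy equivalent to the original graph, so $H_1$ is unchanged, and so is being a tree.

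For the regular semistable model $\mathcal X'$, the relative Picard functor $\mathrm{Pic}^0_{\mathcal X'/R}$ is representable by a separated smooth group scheme. By Raynaud (see Bosch--L\"utkebohmert--Raynaud, N\'eron Models, \S9.5, Theorem 4, and \S9.2, Example 8), it is identified with the identity component $\mathcal J^0$ of the N\'eron model. It then suffices to compute $\mathrm{Pic}^0$ of the geometric special fibre $C=\mathcal X'_{\bar k}$. Let $\nu\colon \tilde C\to C$ be the normalization, with components $\tilde C_v$. Pulling back along $\nu$ gives the exact sequence
\[
0\to \mathbb G_m \to \textstyle\prod_v \mathbb G_m \to \prod_{e}\mathbb G_m \to \mathrm{Pic}^0(C)\to \prod_v \mathrm{Pic}^0(\tilde C_v)\to 0,
\]
which comes from $0\to\mathcal O_C^\times\to\nu_*\mathcal O_{\tilde C}^\times\to\bigoplus_{\text{nodes}}\bar k^\times\to 0$. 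The kernel $T$ of the second-to-last map is therefore the torus $\operatorname{coker}(\mathbb G_m^{V}\to\mathbb G_m^{E})$, where the map is the coboundary of $\Gamma$. Consequently the character group $X^*(T)$ is the kernel of the boundary map $\Z^E\to\Z^V$, which is $H_1(\Gamma,\Z)$. The quotient $A=\prod_v\mathrm{Pic}^0(\tilde C_v)$ is an abelian variety, which gives the displayed extension.

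For the final claim, recall that $J$ has good reduction if and only if $\mathcal J^0_{\bar k}$ is an abelian variety, i.e.\ $T=0$. This holds if and only if $H_1(\Gamma,\Z)=0$, that is, exactly when the connected graph $\Gamma$ is a tree. Having a tree as dual graph, with nodes only joining distinct components, is by definition the compact type condition.

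The main obstacle is the identification $\mathrm{Pic}^0_{\mathcal X'/R}\cong\mathcal J^0$, together with the canonicity of the isomorphism. I would not reprove this but cite Raynaud's theorem; the only point needing care is that the model is regular, which is why the node resolution step comes first.
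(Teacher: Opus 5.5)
Your proposal is correct and follows the same route as the paper. Both cite Raynaud's identification of $\mathrm{Pic}^0$ of a regular semistable model with $\mathcal J^0$, note that resolving nodes only subdivides edges of the dual graph (so $H_1$ and the tree condition are unchanged), and read off good reduction from $T=0$. The only difference is that you write out the normalization-sequence computation of $\mathrm{Pic}^0$ of the geometric special fibre (Bosch--L\"utkebohmert--Raynaud, \S9.2, Example 8), which the paper simply cites.
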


\begin{proof}
This is Raynaud's description of the relative Picard functor; see Raynaud \cite{Raynaud70} and Bosch--L\"utkebohmert--Raynaud \cite[Section~9.2, Theorem~8]{BLR90}.  If the total space is not regular, resolving a node only subdivides the corresponding edge and therefore does not change $H_1$ of the dual graph.  The final assertion is the usual compact-type criterion for good reduction of the Jacobian.
\end{proof}

\begin{corollary} \label{cor:one-vertex-criterion}
Suppose that $X$ has a semistable model whose geometric special fibre is irreducible.  Then the following are equivalent:
\begin{enumroman}
\item $X$ has good reduction;
\item the dual graph has no loops; and
\item $\Jac(X)$ has good reduction.
\end{enumroman}
\end{corollary}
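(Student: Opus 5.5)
We prove the cycle (i)$\Rightarrow$(iii)$\Rightarrow$(ii)$\Rightarrow$(i), taking as fixed the semistable model $\mathcal X$ whose geometric special fibre $\mathcal X_{\bar k}$ is irreducible. The dual graph $\Gamma_{\mathcal X}$ then has exactly one vertex $v$, and every edge is a node of $\mathcal X_{\bar k}$. Since both ends of each edge lie at $v$, every edge is a loop. So $\Gamma_{\mathcal X}$ is a bouquet of $\delta$ circles, where $\delta$ is the number of nodes, and $H_1(\Gamma_{\mathcal X},\Z)\simeq \Z^{\delta}$.

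For (i)$\Rightarrow$(iii), we use the standard fact that the Jacobian of a curve with good reduction has good reduction. If $\mathcal Y$ is a smooth proper model of $X$, then $\operatorname{Pic}^0_{\mathcal Y/R}$ is an abelian scheme extending $J$. An abelian scheme is the Néron model of its generic fibre, so $J$ has good reduction.

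For (iii)$\Rightarrow$(ii), apply Lemma~\ref{lem:raynaud-dual-graph}. If $J$ has good reduction, then the torus $T$ is trivial, so $H_1(\Gamma_{\mathcal X},\Z)=0$. By the computation above, $\delta=0$, and hence there are no loops.

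For (ii)$\Rightarrow$(i), suppose there are no loops. Since every edge is a loop, $\Gamma_{\mathcal X}$ has no edges at all, so $\mathcal X_{\bar k}$ has no nodes. A semistable fibre without nodes is smooth, so $\mathcal X_{\bar k}$ is a smooth irreducible projective curve. Because $\mathcal X$ is flat and proper over $R$, smoothness of the geometric special fibre (hence of the special fibre) together with smoothness of the generic fibre $X$ shows that $\mathcal X\to\operatorname{Spec}R$ is smooth, by the fibrewise criterion for smoothness of flat morphisms. Thus $\mathcal X$ is a smooth proper model of $X$, and $X$ has good reduction.

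The only delicate point is (ii)$\Rightarrow$(i). One should check that it needs neither regularity of the total space nor minimality of the model. It does not, because flatness plus fibrewise smoothness already gives smoothness, whatever the singularities of $\mathcal X$ at the nodes would otherwise have been. Everything else follows directly from Lemma~\ref{lem:raynaud-dual-graph} and the one-vertex structure of the graph.
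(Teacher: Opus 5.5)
Your proof is correct and follows the paper's argument: one vertex forces every edge to be a loop, so $H_1(\Gamma_{\mathcal X},\Z)\simeq\Z^{\delta}$, Lemma~\ref{lem:raynaud-dual-graph} turns good reduction of the Jacobian into $\delta=0$, and a fibre with no nodes is smooth. Your cycle of implications makes explicit two steps the paper leaves tacit: (i)$\Rightarrow$(iii) via $\operatorname{Pic}^0$ of a smooth proper model, and the fibrewise smoothness criterion used in (ii)$\Rightarrow$(i).
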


\begin{proof}
The dual graph has one vertex, so every edge is a loop and $b_1(\Gamma_{\mathcal X})=\#E(\Gamma_{\mathcal X})$.  Thus it is a tree if and only if it has no edges.  Having no edges means that the irreducible special fibre has no nodes, hence is smooth.  The result now follows from Lemma~\ref{lem:raynaud-dual-graph}.
\end{proof}

We next verify that this criterion applies to the full Atkin--Lehner quotient.

\begin{proposition} \label{prop:one-component-quotient}
Let $p\mid N$.  Then the curve $X_0^*(D;N)$ has semistable reduction at $p$, and it has a semistable model over $\Z_p$ whose geometric special fibre is irreducible.  In particular, $X_0^*(D;N)$ has good reduction at $p$ if and only if $J_0^*(D;N)$ has good reduction at $p$.  
\end{proposition}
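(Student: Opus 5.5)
We plan to obtain the model of the quotient from the known semistable model of $X_0(D;N)$ at $p \mid N$, quotient it by the Atkin--Lehner group, and show that $w_p$ identifies the special-fibre components, leaving a single vertex. The final assertion then follows from Corollary~\ref{cor:one-vertex-criterion}.

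\textbf{The model of $X_0(D;N)$.} For $p\mid N$ (so $p\nmid D$), the curve $X_0(D;N)$ has a semistable model over $\Z_p$. When $D=1$ this is the Deligne--Rapoport model of $X_0(N)$; in general it is the analogous model from the moduli interpretation (false elliptic curves with $\Gamma_0(p)$-structure), due to Buzzard and Helm. Its geometric special fibre has exactly two irreducible components. Each is a copy of $X_0(D;N/p)_{\bar{\mathbb F}_p}$, corresponding to whether the kernel of the level-$p$ isogeny is the kernel of Frobenius or of Verschiebung. The two copies cross transversally at the supersingular points, so the dual graph has two vertices and a collection of edges between them. The key classical fact we will use is that $w_p$ extends to this model and swaps the two components, since it exchanges the subgroup of order $p$ with the kernel of the dual isogeny. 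Meanwhile $w_d$ for $d\mid DN/p$ preserves each component, acting on it as $w_d$ on $X_0(D;N/p)$.

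\textbf{Taking the quotient.} Let $\mathcal X$ be this model and $G=W_{DN}$, which acts on $\mathcal X$. Since $\mathcal X$ is normal and projective, the quotient $\mathcal X/G$ exists and is a normal proper model of $X_0^*(D;N)$ over $\Z_p$. Its special fibre is the image of the special fibre of $\mathcal X$, and the components map onto the single curve obtained by identifying the two copies via $w_p$; hence it is irreducible. This image is $X_0^*(D;N/p)_{\bar{\mathbb F}_p}$, up to taking the reduced structure.

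\textbf{Main obstacle: semistability of $\mathcal X/G$.} Quotients of semistable models by finite groups are semistable when the group order is invertible (Raynaud's result for tame actions), but here $\#G$ is a power of $2$, which is wild when $p=2$. I would first try a local argument.
\begin{itemize}
\item At a smooth point of the special fibre the quotient is smooth, since it is a quotient of a smooth point of an irreducible fibre by the stabilizer, which acts through its action on the component.
\item At a node fixed by a stabilizer $H\le G$, the local ring is $\widehat{\mathcal O}\cong W[[x,y]]/(xy-p^{k})$.
\item Elements preserving the branches act by $x\mapsto ux$, $y\mapsto u^{-1}y$. Elements of $wH$ with $w$ involving $w_p$ swap the branches, $x\leftrightarrow y$ up to units.
\item The invariants of $W[[x,y]]/(xy-p^k)$ under a swap form $W[[x+y]]$-type rings, or, in the branch-preserving case, again $W[[x^2,y^2]]/(x^2y^2-p^{2k})$. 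In each case the quotient point is a node or a smooth point.
\end{itemize}
For $p=2$ the computation of invariants is more delicate because $2$ is not a unit. Here I would instead argue via the alternative route: take the stable model of $X_0^*(D;N)$ after a finite extension, which exists by Deligne--Mumford, and compare its dual graph with the quotient of $\Gamma_{\mathcal X}$. This has one vertex because $w_p$ swaps the two vertices, so the stable special fibre is irreducible. Semistable reduction over $\Z_p$ itself would then come from the explicit description of $\mathcal X$ via Deligne--Rapoport/Cerednik--Drinfeld-style moduli, where $G$ acts compatibly with the moduli.

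Given an irreducible semistable model, Corollary~\ref{cor:one-vertex-criterion} gives the equivalence of good reduction of $X_0^*(D;N)$ and of $J_0^*(D;N)$ at $p$.
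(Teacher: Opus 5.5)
Your outline has the same architecture as the paper's proof. You take the Deligne--Rapoport/Buzzard model $\mathcal X$ of $X_0(D;N)$ at $p\mid N$ and use that $w_p$ swaps its two components. You then form the normal quotient $\mathcal X/W_{DN}$, whose special fibre is irreducible, and finish with Corollary~\ref{cor:one-vertex-criterion}.

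The gap is the step you yourself flag as the main obstacle: the semistability of $\mathcal X/W_{DN}$ over $\Z_p$. It comes from a mistaken premise. The quotient of a semistable curve over a discrete valuation ring by a finite group of automorphisms is again semistable \emph{without} assuming the group order is invertible; the result holds for arbitrary finite groups. This goes back to Raynaud, and it is what the paper invokes via \cite[Proposition~1.6]{LiuLorenzini1999}. With that single citation, $p=2$ and odd $p$ are handled uniformly, and no local computation is needed.

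As written, your argument proves the proposition only for odd $p$. There your node-by-node invariant computation is acceptable: a tame stabilizer can be linearized, so the normal forms you list are the only cases. At $p=2$ the stabilizers act wildly and need not be linearizable; for example, they can have several fixed points on the generic fibre specializing to the same point. So neither your smooth-point argument nor your list of node cases applies. This is not a marginal case, since $p=2$ is the prime in most entries of Tables~\ref{CM_points_table} and~\ref{Shimura_points_table}.

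Your workaround does not fill the hole. You give no mechanism for comparing the stable model of $X_0^*(D;N)$ over an extension with $\Gamma_{\mathcal X}/W_{DN}$. The standard mechanism is to know that the quotient of a semistable model is semistable and then contract, which is the same theorem you were trying to avoid. Finally, the proposition asserts an irreducible semistable model over $\Z_p$ itself. Your last sentence, that this ``would come from the explicit moduli description'', is a placeholder rather than an argument.
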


\begin{proof}
The integral model of $X_0(D;N)$ at $p$ has two geometric irreducible components, each identified with the corresponding curve with the $p$-part of the level removed, and these components meet transversally at the supersingular points: for modular curves this is the Deligne--Rapoport model \cite[Chapter~V, Section~1]{DeligneRapoport73}; see also Mazur \cite[Appendix]{Mazur77}.  For Shimura curves over $\Q$, the analogous statement is Buzzard \cite[Theorem~4.7]{Buzzard97}; see also Carayol \cite{Carayol86}.  In both cases the involution $w_p$ interchanges the two components.

Let $\mathcal X$ be the semistable integral model of
$X_0(D;N)$ over $\Z_p$ described above, and form the finite quotient
$ \pi\colon \mathcal X \to \mathcal X/W_{DN}$.  The quotient $\mathcal X/W_{DN}$ is a normal proper flat curve over $\Z_p$, with generic fibre $X_0^*(D;N)$.  By
Liu--Lorenzini~\cite[Proposition~1.6]{LiuLorenzini1999},
the curve $\mathcal Y$ is semistable.

As above, the special fibre of $\mathcal X$ has two irreducible components with $w_p$ interchanging them.  Since $\pi$ is finite and surjective, the images cover the special fibre, so the special fiber is irreducible.  

The final statement follows from Corollary~\ref{cor:one-vertex-criterion}.
\end{proof}

\subsection*{Automorphic criterion}

By the Jacquet--Langlands correspondence, the space of regular differentials on $X_0(D;N)$ is 
\begin{equation} \label{eqn:D-new-decomposition}
H^0(X_0(D;N),\Omega^1) \simeq 
S_2^{\textup{$D$-new}}(\Gamma_0(DN)) \simeq \bigoplus_{D \mid M \mid DN} S_2^{\textup{new}}(\Gamma_0(M))^{\oplus d(DN/M)}
\end{equation}
decomposing into newspaces with multiplicity according to Atkin--Lehner theory.  For $M$ with $D \mid M \mid DN$, define the sign pattern $\varepsilon_M$ by $\varepsilon_M(w_q)=-1,+1$ according as $q \mid D$ or $q \mid (M/D)$ (signs flip at ramified primes).  

Taking $W_{DN}$-invariants removes the oldform multiplicities in \eqref{eqn:D-new-decomposition}.  Indeed, for each prime $q\mid (DN/M)$ (necessarily $q\mid N$), the $q$-old degeneracy space is two-dimensional and $w_q$ has one-dimensional eigenspaces for each sign.  Selecting the invariant line at every such $q$ leaves one copy of the prescribed eigenspace at level $M$; see Atkin--Li \cite{AL78}.  Consequently
\begin{equation}\label{eqn:S2Dnew}
  H^0(X^*_0(D;N),\Omega^1)
  \simeq
  S_2^{\textup{$D$-new},\varepsilon_{DN}}(\Gamma_0(DN))
  \simeq
  \bigoplus_{D\mid M\mid DN}
  S_2^{\new,\varepsilon_M}(\Gamma_0(M)).
\end{equation}
It follows then that the isotypic decomposition of $J^*_0(D;N)$ up to isogeny is 
\begin{equation}\label{eqn:Afmf}
  J^*_0(D;N) \sim_{\Q}
  \prod_{D\mid M\mid DN}
  \prod_{[f]} A_f,
\end{equation}
where $[f]$ runs over Galois orbits of normalized newforms in $S_2^{\new,\varepsilon_M}(\Gamma_0(M))$ and $A_f$ is the associated modular abelian variety.  

We obtain the following criterion, extending Proposition~\ref{prop:one-component-quotient}.  

\begin{lemma}\label{lem:good-reduction-newforms}
$J_0^*(D;N)$ has good reduction at $p \mid DN$ if and only if 
\[ \dim S_2^{\textup{new},\varepsilon_M}(\Gamma_0(M)) = 0 \]
for all $M$ such that $D \mid M \mid DN$ and $p \mid M$.  
\end{lemma}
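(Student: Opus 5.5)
We will use the isogeny decomposition \eqref{eqn:Afmf}. Good reduction of an abelian variety over $\Q_p$ is an isogeny invariant: by the N\'eron--Ogg--Shafarevich criterion, it is equivalent to the inertia group $I_p$ acting trivially on the $\ell$-adic Tate module ($\ell\neq p$), and isogenous varieties have isomorphic rational Tate modules. Likewise, a product has good reduction at $p$ if and only if each factor does. So $J_0^*(D;N)$ has good reduction at $p$ if and only if $A_f$ has good reduction at $p$ for every Galois orbit $[f]$ of newforms in $S_2^{\new,\varepsilon_M}(\Gamma_0(M))$ with $D\mid M\mid DN$.

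It then suffices to show that, for a newform $f$ of squarefree level $M$, the abelian variety $A_f$ has good reduction at $p$ if and only if $p\nmid M$.

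\emph{If $p\nmid M$.} The variety $A_f$ is a quotient of $J_0(M)$, which has good reduction at $p$ (Igusa, Deligne--Rapoport). Hence $A_f$ has good reduction at $p$; alternatively, $I_p$ acts trivially on the $\lambda$-adic representations $\rho_{f,\lambda}$.

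\emph{If $p\mid M$.} Since $M$ is squarefree, $f$ is Steinberg (special) at $p$: the local component $\pi_{f,p}$ is an unramified twist of the Steinberg representation. By local--global compatibility (Carayol; for weight $2$ this goes back to Eichler--Shimura, Deligne--Rapoport, and Langlands), $\rho_{f,\lambda}|_{I_p}$ is unipotent and nontrivial. Equivalently, the conductor of $A_f$ is $p^{[K_f:\Q]}$ at $p$, which is nontrivial. Hence $A_f$ has purely toric, in particular bad, reduction at $p$.

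A more elementary route to the same conclusion uses the Deligne--Rapoport model. The $p$-new quotient of $J_0(M)$ has toric reduction: its Tate module has nontrivial monodromy pairing on the $p$-new part, per Ribet and Grothendieck's description of $J_0(M)$ at $p$. So any factor $A_f$ with $f$ $p$-new cannot have good reduction. I will cite the local Langlands compatibility as the cleanest argument.

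Combining the two cases: $J_0^*(D;N)$ has good reduction at $p$ if and only if no factor with $p\mid M$ occurs, that is, if and only if $S_2^{\new,\varepsilon_M}(\Gamma_0(M))=0$ for all $M$ with $D\mid M\mid DN$ and $p\mid M$. Factors with $p\nmid M$ are harmless. The case $p\mid D$ is covered uniformly, since then every such $M$ is divisible by $p$.

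The main obstacle is justifying rigorously that $p\mid M$ forces bad reduction of $A_f$. This requires the Steinberg/unipotent monodromy input, and care that the isogeny decomposition \eqref{eqn:Afmf} (via Jacquet--Langlands and Atkin--Li) holds over $\Q$, so that the inertia representations match. I would invoke Carayol's theorem for this step.
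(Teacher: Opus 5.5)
Your proposal is correct and follows the paper's route. You apply the N\'eron--Ogg--Shafarevich criterion to the isogeny decomposition \eqref{eqn:Afmf} to reduce to the individual factors $A_f$, and then use that $A_f$, for a newform of squarefree level $M$, has good reduction at $p$ exactly when $p\nmid M$. The paper simply asserts this last fact, whereas you justify it: via good reduction of $J_0(M)$ when $p\nmid M$, and via the Steinberg local component with nontrivial unipotent inertia (Carayol) when $p\mid M$.
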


In other words, unexpected good reduction for the Jacobian occurs exactly when all newspaces in the decomposition with level a multiple of $p$ are lacking.  

\begin{proof}
For $\ell\neq p$, the $\ell$-adic Tate module of $A$ is unramified at $p$ if and only if the same is true for each $A_f$ occurring in the isogeny decomposition.  By the N\'eron--Ogg--Shafarevich criterion \cite{SerreTate68}, this is equivalent to good reduction.  For a modular abelian variety attached to a newform of squarefree level $M$, ramification at $p$ occurs exactly when $p\mid M$.
\end{proof}

\begin{cor} \label{cor:X*J*}
If $X_0^*(D;N)$ has genus $>0$, then both $X_0^*(D;N)$ and $J_0^*(D;N)$ have bad reduction at all primes $p \mid D$.
\end{cor}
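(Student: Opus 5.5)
Let $p \mid D$ and suppose $g(X_0^*(D;N))>0$. The plan is to prove bad reduction of the Jacobian first, via Lemma~\ref{lem:good-reduction-newforms}, and then deduce bad reduction of the curve. Since $p \mid D$ and every $M$ in the decomposition \eqref{eqn:S2Dnew} satisfies $D \mid M$, \emph{every} such $M$ is divisible by $p$. So the condition in Lemma~\ref{lem:good-reduction-newforms} for good reduction at $p$ asks that $\dim S_2^{\new,\varepsilon_M}(\Gamma_0(M))=0$ for all $M$ with $D\mid M\mid DN$. By \eqref{eqn:S2Dnew} this is equivalent to $H^0(X_0^*(D;N),\Omega^1)=0$, i.e.\ to genus $0$. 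Since we assume the genus is positive, some summand $S_2^{\new,\varepsilon_M}(\Gamma_0(M))$ is nonzero. Hence some $A_f$ with $p\mid M$ appears in the isogeny decomposition \eqref{eqn:Afmf}. Its Tate module is ramified at $p$ (the level is squarefree and divisible by $p$), so $J_0^*(D;N)$ has bad reduction at $p$.

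For the curve itself, the implication we need is the easy direction: if $X_0^*(D;N)$ had good reduction at $p$, then it has a smooth proper model over $\Z_p$. Its relative Picard scheme $\operatorname{Pic}^0$ is then an abelian scheme extending $J_0^*(D;N)$, so the Jacobian would have good reduction, a contradiction. Equivalently, we can apply Lemma~\ref{lem:raynaud-dual-graph} to the smooth model, whose dual graph is a single vertex with no edges and hence a tree. So bad reduction of $J_0^*(D;N)$ at $p$ forces bad reduction of $X_0^*(D;N)$ at $p$. This direction does not need Proposition~\ref{prop:one-component-quotient}, which only treats $p\mid N$.

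The only step requiring care is the bookkeeping in \eqref{eqn:S2Dnew}. We must check that positive genus really means some $\varepsilon_M$-newspace is nonzero, rather than the differentials coming from elsewhere. This is exactly the content of the isomorphism \eqref{eqn:S2Dnew}, which we take as given. We also use the standard fact that $A_f$ for a newform of squarefree level $M$ has bad (purely toric) reduction at each $p\mid M$; this is already invoked in the proof of Lemma~\ref{lem:good-reduction-newforms}. With these in hand, the argument is immediate.
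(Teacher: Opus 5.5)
Your proposal is correct and follows the paper's argument: every summand in \eqref{eqn:Afmf} has level $M$ with $p \mid D \mid M$, so positive genus forces some $A_f$ of level divisible by $p$, and Lemma~\ref{lem:good-reduction-newforms} then gives bad reduction of $J_0^*(D;N)$. Your explicit deduction for the curve (a smooth model over $\Z_p$ would make $\operatorname{Pic}^0$ an abelian scheme extending the Jacobian) fills in a step the paper leaves implicit, and you rightly note that Proposition~\ref{prop:one-component-quotient} cannot be used here since it only treats $p \mid N$.
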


\begin{proof}
Since the sum \eqref{eqn:Afmf} is nonzero, any form in level $M$ will have $p \mid M$ so by the previous lemma, $p$ is a prime of bad reduction.
\end{proof}

\begin{proof}[{Proof of Theorem~\ref{theorem:modularshimura}}]
Part (a) is proven in Proposition~\ref{prop:one-component-quotient}; part (b) is proven in Corollary~\ref{cor:X*J*}.  So we prove part (c).  Lemma~\ref{lem:good-reduction-newforms} gives a straightforward way to compute Tables \ref{CM_points_table} and \ref{Shimura_points_table} using Tables \ref{Complete_N_5}--\ref{Complete_N_2.1}.  Namely, for each squarefree level $N'$ in the finite range produced by Theorem~\ref{theorem:newforms}, and for all possible decompositions $N'=DN$ with $D$ the product of an even number of primes, we get a sign pattern and determine the set of primes $p$ which satisfy Lemma~\ref{lem:good-reduction-newforms}.  Keeping exactly those pairs $(N,p)$ gives Table \ref{CM_points_table} and \ref{Shimura_points_table}.
\end{proof}

\begin{remark}
    We note that unexpected primes of good reduction can appear in covers of $X_0^*(N)$ and $X_0^*(D;N)$. 
    \begin{itemize}
        \item A modular curve example is $X_0(74)/\langle w_{74} \rangle$, a genus $2$ curve of conductor $37^2$. In this case, the quotient $X_0^*(74)$ is a genus $1$ curve of conductor $37$.
        \item A Shimura curve example can be seen in \cite[Example 5.1]{PS25}. The curve $$X_0(210;11) / \langle w_2,w_5,w_7,w_{33} \rangle$$ is a genus $1$ curve with unexpected prime of good reduction at $11$. The quotient $X_0^*(210; 11)$ is a genus $0$ curve.
    \end{itemize}
\end{remark}

\section{Tables}

We provide Tables \ref{Complete_N_5}, \ref{Complete_N_3}, \ref{Complete_N_2}, and \ref{Complete_N_2.1} from Section \ref{sec:dimensions}, and Tables \ref{CM_points_table} and \ref{Shimura_points_table} from Section \ref{sec:gained-primes}. We elaborate on the entries of Tables \ref{CM_points_table} and \ref{Shimura_points_table}. The first column (respectively the first two columns) indicates the level of the modular curve. The next two columns indicate the genera of curves $X_0(N)$ and $X_0^*(N)$ (and respectively $X_0(D;N)$ and $X_0^*(D;N)$). We use abbreviations $g \colonequals g(X_0(N))$ (and respectively $g \colonequals g(X_0(D;N))$) and $g^* \colonequals g(X_0^*(N))$ (and respectively $g^* \colonequals g(X_0^*(D;N))$). The fourth (respectively the fifth) column indicates gained primes of good reduction. We can find such good primes by looking at the isotypic decomposition of the Jacobians of respective modular curves (abbreviated as $J^*$). By Proposition \ref{prop:one-component-quotient}, such primes are also gained primes of good reduction for the modular curves (abbreviated as $X^*$).  Some increments of primes of good reduction for modular Atkin--Lehner quotients $X_0(N)^*$ have been studied in previous literature, which we explain in the last column of Table \ref{CM_points_table}. We omit the genus-zero quotients, since their Jacobians are trivial and therefore do not distinguish the reduction behaviour of the curve.





\begin{small}
\begin{table}[ht!]
\begin{tabular}{c|c}
Sign pattern $\varepsilon_N$ & Values of $N$ \\ \hline \hline
$[1,1]$ & 35, 55, 95, 119 \\ \hline
$[-1,-1]$ & 35, 55, 65, 85, 95, 115, 215, 77, 119, 161, 143\\ \hline \hline
$[1,-1,-1]$ & 455 \\ \hline
$[-1,-1,1]$ & 455
\end{tabular}
\caption{Complete list of squarefree integers $N \equiv \pm 1 \psmod{6}$ such that $\dim S_2^{\textup{new}, \varepsilon_N(N)} = 0$ with respective sign patterns.} \label{Complete_N_5} 
\end{table}

\begin{table}[ht!]
\begin{tabular}{c|c}
Sign pattern $\varepsilon_N$ & Values of $N$ \\ \hline \hline
$[1,1]$ & 15, 21, 33, 39, 51, 69, 87, 111, 159 \\ \hline
$[1,-1]$ & 21, 57, 93\\ \hline
$[-1,1]$ & 15, 33\\ \hline
$[-1,-1]$ & 15, 21, 33, 39, 51, 57, 69, 87, 93, 111, 129, 159, 183, 237\\ \hline \hline
$[1,1,1]$ & 105, 195, 255, 231 \\ \hline
$[1,1,-1]$ & 165, 273 \\ \hline
$[1,-1,1]$ & 105, 165 \\ \hline
$[1,-1,-1]$ & 105, 165, 195, 255, 285, 435, 615, 231 \\ \hline
$[-1,1,1]$ & 105 \\ \hline
$[-1,1,-1]$ & 105, 165, 195, 255, 231, 273, 399 \\ \hline
$[-1,-1,1]$ & 105, 165, 195, 255, 285, 435, 615, 231, 273, 399 
\end{tabular}
\caption{Squarefree integers $N \equiv 3 \psmod{6}$ such that $\dim S_2^{\textup{new}, \varepsilon_N(N)} = 0$ with respective sign patterns.} \label{Complete_N_3} 
\end{table}

\newpage

\begin{table}[ht]
\begin{tabular}{c|c}
Sign pattern $\varepsilon_N$ & Values of $N$ \\ \hline \hline
$[1,1]$ & 6, 10, 14, 22, 26, 34, 38, 46, 62, 74, \\
& 86, 94, 134, 146, 194, 206 \\ \hline
$[1,-1]$ & 6, 10, 22, 34, 58, 82\\ \hline
$[-1,1]$ & 6, 10, 14, 22, 46\\ \hline
$[-1,-1]$ & 6, 10, 14, 22, 26, 34, 38, 46, 58, 62, \\
& 74, 82, 86, 94, 106, 118, 122, 134, 146, \\
& 166, 178, 194, 202, 206, 314\\ \hline \hline
$[1,1,1]$ & 30, 42, 66, 78, 114, 174, 186, 222, 366, 654, \\
& 70, 110, 130, 170, 230, 530, 182, 266 \\ \hline
$[1,1,-1]$ & 30, 42, 66, 102, 138, 282, 498, 70, 130, 190, 310, 322, 418 \\ \hline
$[1,-1,1]$ & 42, 78, 114, 258, 70, 130, 154 \\ \hline
$[1,-1,-1]$ & 30, 42, 66, 78, 102, 114, 138, 174, 186, 222, \\
& 246, 282, 318, 354, 426, 534, 70, 110, 190, 230, 290, \\
& 310, 154, 182, 238, 266, 434, 518, 602, 286, 374, 494 \\ \hline
$[-1,1,1]$ & 30, 78, 102, 190 \\ \hline
$[-1,1,-1]$ & 30, 42, 66, 78, 102, 114, 138, 174, 186, 222, \\
& 246, 258, 402, 426, 474, 70, 110, 130, 170, 230, 290, \\
& 370, 154, 182, 238, 266, 434, 374, 494 \\ \hline
$[-1,-1,1]$ & 30, 42, 66, 78, 102, 114, 138, 174, 186, 222, \\
& 246, 258, 282, 318, 354, 366, 402, 426, 438, 474, 498, \\
& 534, 582, 642, 654, 678, 822, 70, 110, 130, 170, 190, \\
& 230, 290, 310, 370, 410, 530, 610, 154, 182, 266, 322, \\
& 406, 518, 602, 286, 374, 418, 442 \\ \hline
$[-1,-1,-1]$ & 30, 42, 78, 70
\end{tabular}
\caption{Squarefree integers $N \equiv 0 \psmod{2}$ such that $\dim S_2^{\textup{new}, \varepsilon_N(N)} = 0$ with respective sign patterns, $\omega(N) \leq 3$.}\label{Complete_N_2} 
\end{table}

\newpage

\begin{table}[ht]
\begin{tabular}{c|c}
Sign pattern $\varepsilon_N$ & Values of $N$ \\ \hline \hline
$[1,1,1,1]$ & 330, 510, 870, 1230, 546, 770 \\ \hline 
$[1,1,1,-1]$ & 210, 390, 1110, 1290, 858 \\ \hline 
$[1,1,-1,1]$ & 210, 330, 570, 462 \\ \hline 
$[1,1,-1,-1]$ & 210, 390, 510, 690, 1590, 546, 798, 770 \\ \hline 
$[1,-1,1,1]$ & 210, 330, 510, 690, 870, 1122 \\ \hline 
$[1,-1,1,-1]$ & 210, 330, 390, 510, 570, 930, 462, 714, 910 \\ \hline 
$[1,-1,-1,1]$ & 210, 330, 390, 510, 690, 1110, 462, 546, 714, 966, \\
& 1218, 858, 1326, 1190 \\ \hline
$[1,-1,-1,-1]$ & 330 \\ \hline 
$[-1,1,1,1]$ & 210, 462, 798, 910 \\ \hline
$[-1,1,1,-1]$ & 210, 330, 390, 570, 1770, 462, 546, 966, 770 \\ \hline
$[-1,1,-1,1]$ & 210, 330, 390, 510, 570, 690, 930, 1410, \\
& 546, 714, 798, 1302, 1190 \\ \hline
$[-1,1,-1,-1]$ & 462 \\ \hline 
$[-1,-1,1,1]$ & 210, 330, 390, 510, 570, 690, 870, 930, \\
& 1410, 462, 546, 714, 966, 1218, 858, 770 \\ \hline 
$[-1,-1,1,-1]$ & 330 \\ \hline 
$[-1,-1,-1,-1]$ & 210, 330, 390, 510, 570, 690, 870, 930, \\
& 1110, 1230, 1290, 1410, 1590, 1770, 2010, 2130, 462, \\
& 546, 714, 798, 966, 1218, 1302, 1722, 1974, 858, 1122, \\
& 1254, 1914, 1326, 1794, 1938, 770, 910, 1190 \\ \hline\hline
$[1,1,-1,1,-1]$ & 2310 \\ \hline 
$[1,-1,1,1,-1]$ & 2730 \\ \hline
$[1,-1,-1,-1,-1]$ & 2310, 2730, 3990 \\ \hline 
$[-1,1,1,1,-1]$ & 3570 \\ \hline 
$[-1,1,1,-1,1]$ & 2310 \\ \hline 
$[-1,1,-1,-1,-1]$ & 2310, 4290 \\ \hline 
$[-1,-1,1,-1,-1]$ & 2310, 2730 \\ \hline 
$[-1,-1,-1,1,-1]$ & 2310, 2730, 3570, 3990 \\ \hline
$[-1,-1,-1,-1,1]$ & 2310, 2730, 3570, 3990, 4290 
\end{tabular}
\caption{Squarefree integers $N \equiv 0 \psmod{2}$ such that $\dim S_2^{\textup{new}, \varepsilon_N(N)} = 0$ with respective sign patterns, $\omega(N)=4,5$.}\label{Complete_N_2.1} 
\end{table}

\newpage

\begin{table}[ht]
\begin{tabular}{c|c|c||c|c}
$N$ & $g$ & $g^*$ & $p$ good for $J^*$ and $X^*$ & exceptional $\simeq$ \\ \hline\hline
$74$ & $8$ & $1$ & $2$ & $X_0(74)^* \simeq X_0(34)^*$ \\ \hline
$86$ & $10$ & $1$ & $2$ & $X_0(86)^* \simeq X_0(43)^*$\\ \hline
$111$ & $11$ & $1$ & $3$ & $X_0(111)^* \simeq X_0(37)^*$\\ \hline
$114$ & $17$ & $1$ & $2$ & $X_0(114)^* \simeq X_0(57)^*$\\ \hline
$130$ & $17$ & $1$ & $2$ & N/A\\ \hline
$134$ & $16$ & $2$ & $2$ & $X_0(134)^* \simeq X_0(67)^*$ \\ \hline
$146$ & $17$ & $2$ & $2$ & $X_0(146)^* \simeq X_0(73)^*$\\ \hline
$159$ & $17$ & $1$ & $3$ & $X_0(159)^* \simeq X_0(53)^*$ \\ \hline
$170$ & $23$ & $2$ & $2$ & N/A\\ \hline
$174$ & $27$ & $1$ & $3$ & $X_0(174)^* \simeq X_0(58)^*$ \\ \hline
$182$ & $25$ & $1$ & $2$ & $X_0(182)^* \simeq X_0(91)^*$\\ \hline
$186$ & $29$ & $2$ & $2$ & $X_0(186)^* \simeq X_0(93)^*$\\ \hline
$194$ & $23$ & $3$ & $2$ & $X_0(194)^* \simeq X_0(97)^*$ \\ \hline
$195$ & $25$ & $1$ & $3$ & N/A \\ \hline
$206$ & $25$ & $2$ & $2$ & $X_0(206)^* \simeq X_0(103)^*$ \\ \hline
$222$ & $35$ & $1$ & $2,3$ & $X_0(222)^* \simeq X_0(37)^*$\\ \hline
$230$ & $33$ & $2$ & $2$ & $X_0(230)^* \simeq X_0(115)^*$ \\ \hline
$231$ & $29$ & $1$ & $3$ & $X_0(231)^* \simeq X_0(77)^*$ \\ \hline
$255$ & $33$ & $2$ & $3$ & N/A \\ \hline
$266$ & $37$ & $2$ & $2$ & $X_0(266)^* \simeq X_0(133)^*$ \\ \hline
$330$ & $65$ & $2$ & $2$ & N/A \\ \hline
$546$ & $105$ & $4$ & $2$ & N/A
\end{tabular}
\caption{Gained primes $p$ of good reduction for modular Atkin--Lehner quotients $X_0^*(N)$.}\label{CM_points_table} 
\end{table}
\end{small}


\newpage

\begin{footnotesize}
\begin{table}[ht!]
\begin{tabular}{c|c|c|c||c}
$N$ & $D$ & $g$ & $g^*$ & $p$ good for $J^{*}$ and $X^{*}$ \\ \hline\hline
2 & 91 & 19 & 1 & 2  \\ \hline
2 & 123 & 21 & 1 & 2  \\ \hline
2 & 133 & 27 & 2 & 2  \\ \hline
2 & 141 & 23 & 1 & 2  \\ \hline
2 & 145 & 29 & 2 & 2  \\ \hline
2 & 155 & 31 & 1 & 2  \\ \hline
2 & 177 & 29 & 2 & 2  \\ \hline
2 & 187 & 41 & 2 & 2  \\ \hline
2 & 213 & 35 & 2 & 2  \\ \hline
2 & 217 & 45 & 3 & 2  \\ \hline
2 & 247 & 55 & 3 & 2  \\ \hline
2 & 259 & 55 & 3 & 2  \\ \hline
2 & 267 & 45 & 3 & 2  \\ \hline
2 & 301 & 63 & 4 & 2  \\ \hline
2 & 1155 & 121 & 1 & 2  \\ \hline
2 & 1365 & 145 & 2 & 2  \\ \hline
2 & 1995 & 217 & 3 & 2  \\ \hline
3 & 142 & 23 & 1 & 3  \\ \hline
3 & 145 & 37 & 2 & 3  \\ \hline
3 & 158 & 27 & 1 & 3  \\ \hline
3 & 205 & 53 & 3 & 3  \\ \hline
3 & 1430 & 161 & 1 & 3  \\ \hline
5 & 91 & 37 & 1 & 5  \\ \hline
6 & 65 & 49 & 1 & 3  \\ \hline
6 & 85 & 65 & 1 & 3  \\ \hline
6 & 91 & 73 & 2 & 2  \\ \hline
6 & 115 & 89 & 2 & 2  \\ \hline
6 & 133 & 109 & 3 & 2  \\ \hline
10 & 57 & 53 & 1 & 2  \\ \hline
10 & 77 & 89 & 3 & 2  \\ \hline
10 & 93 & 89 & 2 & 2  \\ \hline
14 & 51 & 65 & 2 & 2  \\ \hline
15 & 38 & 37 & 1 & 3  \\ \hline
21 & 46 & 57 & 1 & 3  \\ \hline
26 & 33 & 69 & 2 & 2  \\ \hline
34 & 21 & 53 & 1 & 2  \\ \hline
34 & 35 & 109 & 3 & 2  \\ \hline
34 & 39 & 109 & 3 & 2  \\ \hline
46 & 15 & 49 & 1 & 2  \\ \hline
46 & 21 & 73 & 2 & 2  \\ \hline
51 & 14 & 37 & 1 & 3  \\ \hline
58 & 21 & 89 & 3 & 2  \\ \hline
74 & 15 & 77 & 2 & 2  \\ \hline
85 & 14 & 53 & 1 & 5  \\ \hline
93 & 14 & 65 & 1 & 3  \\ \hline
141 & 10 & 65 & 1 & 3 
\end{tabular}
\caption{Gained primes $p$ of good reduction for Shimura curves $X_0^*(D;N)$.}\label{Shimura_points_table} 
\end{table}
\end{footnotesize}

\bibliography{biblio}
\bibliographystyle{alpha}

\end{document}